\let \newline = \par
\theoremstyle{definition}
\newtheorem{theorem}{Theorem}
\newtheorem{corollary}[theorem]{Corollary}
\newtheorem{example}[theorem]{Example}
\newtheorem{open problem}[theorem]{Open Problem}
\newtheorem*{teo}{Theorem 34}
\newtheorem{definition}[theorem]{Definition}
\newtheorem{remark}[theorem]{Remark}
\newtheorem{lemma}[theorem]{Lemma}
\newcommand{\R}{\mathbb{R}}
\newcommand{\Z}{\mathbb{Z}}
\newtheorem{notation}[theorem]{Notation}
\begin{document}

\title{RELATIVE OUTER AUTOMORPHISMS OF FREE GROUPS}

\author{Erika Meucci}
\maketitle

\begin{abstract}
Let $A_1, \ldots, A_k$ be a system of free factors of $F_n$. The
group of relative automorphisms $\mathrm{Aut}(F_n;A_1, \ldots, A_k)$
is the group given by the automorphisms of $F_n$ that restricted
to each $A_i$ are conjugations by
ele\-ments in $F_n$. The group of relative outer automorphisms is defined as
$\mathrm{Out}(F_n;A_1, \ldots, A_k) = \mathrm{Aut}(F_n;A_1, \ldots, A_k) / \mathrm{Inn}(F_n)$,
where $\mathrm{Inn}(F_n)$ is the normal subgroup of $\mathrm{Aut}(F_n)$ given by all
the inner automorphisms.
We define a contractible space on which $\mathrm{Out}(F_n;A_1, \ldots, A_k)$ acts with finite
stabilizers and we compute the virtual cohomological dimension of this group.
\end{abstract}

\section{Introduction}

Let $F_n$ denote the free group on $n$ generators. We consider the
group of automorphisms of $F_n$, denoted by $\mathrm{Aut}(F_n)$, and the
group of outer automorphisms
$$
\mathrm{Out}(F_n) = \mathrm{Aut}(F_n) / \mathrm{Inn}(F_n),
$$
where $\mathrm{Inn}(F_n)$ is the normal subgroup of $\mathrm{Aut}(F_n)$ given by all
the inner automorphisms.
\newline Culler and Vogtmann introduced a space $\mathrm{CV}_n$ on which the
group $\mathrm{Out}(F_n)$ acts with finite stabilizers and proved that $\mathrm{CV}_n$
is contractible. That space $\mathrm{CV}_n$ is called \emph{outer space}.
\newline Let $A_1, \ldots, A_k$ be a system of free factors of $F_n$, i.e., there exists
$B < F_n$ such that $F_n = A_1 * \cdots * A_k * B$. We define the
group of relative (to $A_1, \ldots, A_k$) automorphisms
$\mathrm{Aut}(F_n;A_1, \ldots, A_k)$ given by the elements $f \in \mathrm{Aut}(F_n)$
such that $f$ restricted to each $A_i$ is a conjugation by an
element in $F_n$.
\newline Obviously, $\mathrm{Aut}(F_n) > \mathrm{Aut}(F_n;A_1, \ldots, A_k) \triangleright \mathrm{Inn}(F_n)$.
\newline We define also the group of relative (to
$A_1, \ldots, A_k$) outer automorphisms:
$$
\mathrm{Out}(F_n;A_1, \ldots, A_k) = \mathrm{Aut}(F_n;A_1, \ldots, A_k) / \mathrm{Inn}(F_n) <
\mathrm{Out}(F_n).
$$
We are interested in finding the relative outer space $\mathrm{CV}_n(A_1,
\ldots, A_k)$ on which $\mathrm{Out}(F_n;A_1, \ldots, A_k)$ acts with finite
stabilizers and proving that $\mathrm{CV}_n(A_1,$ \ldots, $A_k)$ is
contractible. Moreover, we will compute the VCD of $\mathrm{Out}(F_n;A_1,
\ldots$ $, A_k)$. Let $s(i)$ be the minimum number of generators for $A_i$.
\begin{teo}
We have
$$
\mathrm{vcd}(\mathrm{Out}(F_n;A_1, \ldots, A_k)) = 2n - 2s(1)- \cdots - 2 s(k) +2k
-2-m,
$$
where $s(i_1) = \cdots = s(i_m) = 1$ and $s(j)>1$ for  $j \neq i_1,
\ldots, i_m$.
\end{teo}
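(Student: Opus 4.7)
The strategy follows the classical Culler--Vogtmann approach for computing $\mathrm{vcd}(\mathrm{Out}(F_n))=2n-3$, adapted to the relative setting. The main ingredients are the contractible space $\mathrm{CV}_n(A_1,\ldots,A_k)$ together with the finite-stabilizer action of $\mathrm{Out}(F_n;A_1,\ldots,A_k)$ established earlier in the paper, an equivariant finite-dimensional deformation retract $K_n(A_1,\ldots,A_k)$ of this space (the ``relative spine''), and an explicit free abelian subgroup realising the lower bound.

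For the upper bound I would first build $K_n(A_1,\ldots,A_k)$ as the geometric realization of the poset of reduced marked graphs of groups $(G,v_1,\ldots,v_k;A_1,\ldots,A_k)$ representing points of $\mathrm{CV}_n(A_1,\ldots,A_k)$, ordered by the equivariant forest-collapse relation, with admissible collapses forbidden from identifying two distinct distinguished vertices. The standard barycentric argument of Culler--Vogtmann, adapted to the relative setting, shows $K_n(A_1,\ldots,A_k)$ is an equivariant deformation retract of the contractible ambient space. Because stabilizers are finite, a torsion-free finite-index subgroup of $\mathrm{Out}(F_n;A_1,\ldots,A_k)$ acts freely on $K_n(A_1,\ldots,A_k)$, so by Brown's theorem $\mathrm{vcd}(\mathrm{Out}(F_n;A_1,\ldots,A_k))\le \dim K_n(A_1,\ldots,A_k)$.

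The next step is to compute $\dim K_n(A_1,\ldots,A_k)$ by identifying its maximal simplices with reduced marked graphs of groups whose underlying graph $G$ has first Betti number $n-\sum s(i)$, with $v_i$ carrying vertex group $A_i$ and free vertices trivalent. Two features must be tracked carefully: first, a univalent distinguished vertex with $s(i)=1$ cannot contribute to the spine dimension because the candidate Dehn twist there is conjugation by the generator of $A_i$ in $F_n$, hence already inner, which produces the correction $-m$; second, at each distinguished vertex with $s(i)\ge 2$ one admissible ``one-step blow-up'' (splitting off a rank-one sub-vertex and leaving the complementary sub-vertex group at $v_i$) contributes an additional collapsible edge. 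An Euler-characteristic tally over vertex valences, combined with the fact that $\dim K_n(A_1,\ldots,A_k)$ equals the maximum size of a subforest whose collapse does not merge two distinguished vertices, then produces $\dim K_n(A_1,\ldots,A_k)=2n-2\sum s(i)+2k-2-m$.

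For the matching lower bound I would exhibit a free abelian subgroup of $\mathrm{Out}(F_n;A_1,\ldots,A_k)$ of the same rank, assembled from commuting Dehn-twist and partial-conjugation automorphisms supported on the non-loop edges of a fixed maximal marked graph of groups. Each such twist acts on every $A_j$ as conjugation by an element of $F_n$ and so represents an element of the relative outer group, and twists supported on disjoint edges commute in $\mathrm{Out}(F_n)$. Matching the edge-count from the spine computation pairs each collapsible edge with an independent twist generator, giving the required $\mathbb Z^d$-subgroup with $d=2n-2\sum s(i)+2k-2-m$. The technical heart of the argument is the reducedness analysis at distinguished vertices, in particular the $-m$ correction at $\mathbb Z$-vertices and the $+1$ blow-up contribution per vertex with $s(i)\ge 2$; once these are in place, the upper and lower bounds coincide and the formula follows.
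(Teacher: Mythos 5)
Your proposal follows the same overall architecture as the paper: exhibit the contractible space, deformation-retract to a finite-dimensional spine on which the group acts with finite stabilizers (so that the dimension of the spine bounds the vcd from above), compute that dimension by an Euler-characteristic count over vertex valences, and realize the lower bound by a free abelian subgroup of the matching rank built from commuting twists and partial conjugations. Your insistence that collapses must not merge two distinguished vertices is exactly the paper's passage from the relative spine $S_n(\mathcal{A})$ to the \emph{small} spine $D_n(\mathcal{A})$, which is the crucial step: the relative spine itself has dimension $2n+3k-2\sum s(i)-3-m$ when $n>\sum s(i)$ and would give a strictly weaker upper bound, so if you want this to work you must use the small spine, as you implicitly do.

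Where your account drifts from the paper is in how the two correction terms in the dimension count are justified. The paper works directly with the ambient rank-$n$ graph $\Gamma$ (not the collapsed rank-$(n-\sum s(i))$ graph $\widehat\Gamma$): in a maximal graph of the small spine the $k-m$ basepoints of wedge cycles with $s(j)\ge 2$ have valence $2s(j)$ and every other vertex is trivalent, and then $V-E=1-n$ gives $V$ and the dimension is $V-k$. In this computation the $-m$ has a purely combinatorial origin: a wedge cycle with $s(j)=1$ has no basepoint of valence $2s(j)=2$ (that would violate the valence-$\ge 3$ condition), so its basepoint is an ordinary trivalent vertex and contributes differently to the valence sum. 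Your explanation that ``the candidate Dehn twist there is already inner'' is a lower-bound argument imported into the upper-bound count, and it is not a valid reason for a combinatorial dimension formula; if you try to run the Euler-characteristic tally this way you will find the $-m$ has to come out of the valence bookkeeping, not out of a statement about inner automorphisms. Similarly, your ``one admissible one-step blow-up at each $v_i$ with $s(i)\ge 2$'' is an attempt to reconcile a graph-of-groups count (where the wedge cycle is a black-box vertex group) with the paper's model (where the wedge cycles are embedded subgraphs that can themselves be subdivided); the correction $+(k-m)$ does come out numerically, but the paper avoids the issue entirely by never collapsing the wedge cycles in the count, and there is no canonical ``rank-one sub-vertex'' to split off in the paper's framework, so that description does not correspond to anything in $\mathrm{CV}_n(\mathcal{A})$. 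Finally, for the lower bound the paper writes down explicit generators ($\alpha_i:x_i\mapsto y_1^1x_i$, $\beta_i:x_i\mapsto x_i\bar y_1^1$, $\gamma_j$ conjugating $A_j$ by $y_1^1$, $\delta_r$ conjugating $A_r$ by $y_1^r$) and verifies commutativity and independence by hand, noting that $\gamma_1$ and $\delta_1$ must be excluded (and $\delta_r$ only makes sense for $s(r)\ge 2$) because of relations with inner automorphisms; your ``pair each collapsible edge with an independent twist generator'' is a reasonable heuristic but glosses over exactly this dependence analysis, which is where the $-m$ genuinely enters the lower bound.
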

Our computation of the virtual cohomological dimension of the
relative outer space agrees with the computation in \cite{BCV} when
$m=k$. For $k=n$ and $s(1)= \cdots =s(k)=1$, $\mathrm{Out}(F_n;A_1, \ldots, A_k)$ is
called the \emph{pure symmetric automorphism group}. In \cite{C} Collins
showed that the virtual cohomological
dimension of the pure symmetric automorphism group is $n-2$.

\subsection*{Acknowledgments}
I would like to thank my advisor
Mladen Bestvina for his support and his suggestions.

\section{\boldmath $\mathrm{Out}(F_n;A_1, \ldots, A_k)$ and $\mathrm{CV}_n(A_1, \ldots, A_k)$}

The goal of this section is to define carefully $\mathrm{Out}(F_n;A_1,
\ldots, A_k)$ and $\mathrm{CV}_n(A_1,$ $ \ldots, A_k)$.
\newline Consider $A_i=<y_1^i, \ldots,
y_{s(i)}^{i}>$ and $F_n=<y_1^1, \ldots, y_{s(k)}^{k}, x_1, \dots,
x_{n-\sum_{i=1}^{k} s(i)}>$. By a graph, we mean a connected $1$-dimensional CW complex.
\newline Let the \emph{relative rose} $R_n(A_1,
\ldots, A_k)$ be a graph obtained by a wedge of $n-\sum_{i=1}^{k}
s(i)$ circles attaching $\sum_{i=1}^{k} s(i)$ circles $C_1^1,
\ldots, C_{s(k)}^{k}$ on $k$ stems as in Figure~\ref{fig:relrose}.
The edges are denoted by $C_1^1, \ldots, C_{s(k)}^{k}, f_1, \ldots,
f_k, e_1, \ldots, e_{n-\sum_{i=1}^{k} s(i)}$. Moreover,
$$
\pi_1(R_n(A_1, \ldots, A_k), v) \cong F_n = <y_1^1, \ldots,
y_{s(k)}^{k}, x_1, \dots, x_{n-\sum_{i=1}^{k} s(i)}>,
$$
where $v$ is the central vertex in $R_n(A_1, \ldots, A_k)$ (see
Figure~\ref{fig:relrose}), by declaring $y_i^j$ to be the homotopy
class of $C_i^j$ and $x_i$ to be the homotopy class of the loop
$e_i$.

\begin{figure}[htbp]
\begin{center}
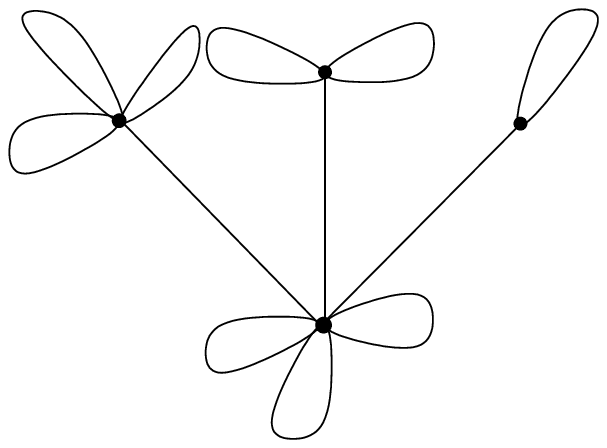
\caption{The relative rose $R_9(A_1, A_2, A_3)$.}
\label{fig:relrose}
\end{center}
\end{figure}

Let $(R_n(A_1,\ldots, A_k), \underline{k})$ be the graph
$R_n(A_1,\ldots, A_k)$ equipped with inclusions $k_j: \bigvee_{i=1}^{s(j)} S^1
\rightarrow R_n(A_1,\ldots, A_k)$ that identifies
$\bigvee_{i=1}^{s(j)} S^1$ with $\bigvee_{i=1}^{s(j)} C_i^j$, for
all $j=1, \ldots, k$.
\begin{definition}
Let $\Gamma$ be a graph of rank $n$ with vertices of valence at
least $3$, equipped with embeddings $l_j: \bigvee_{i=1}^{s(j)} S^1
\rightarrow \Gamma$ for $j=1, \ldots, k$. We call $\mathbb{B}_j =
l_j(\bigvee_{i=1}^{s(j)} S^1)$ \emph{wedge cycle}. The \emph{dual
graph} of the $\mathbb{B}_j$'s is the graph with one vertex for each
wedge cycle, one vertex $w$ for each intersection between two or
more wedge cycles and edges between $w$ and vertices corresponding
to the wedge cycles meeting in $w$.
\end{definition}
\begin{definition}
An $(A_1,\ldots, A_k,n)$\emph{-graph} $(\Gamma, \underline{l})$ is a
finite graph $\Gamma$ of rank $n$ with vertices of valence at least $3$,
with possible separating edges, equipped with embeddings $l_j:
\bigvee_{i=1}^{s(j)} S^1 \rightarrow \Gamma$ for $j=1, \ldots, k$,
such that any two $\mathbb{B}_j$ intersect in at most a point and
the dual graph of the $\mathbb{B}_j$'s is a forest.
\end{definition}

\begin{notation}
We will denote by $\mathcal{A}$ the set of free factors $A_1,\ldots, A_k$.
\end{notation}

\begin{example}\label{exnograph}
Consider the graph in Figure~\ref{fig:exnograph}. Each loop of the
same color is a wedge cycle. That graph is not
an $(A_1, A_2, A_3, 4)$-graph because the dual graph of the $\mathbb{B}_j$'s
is a circle (see Figure~\ref{fig:dualgraph}).

\begin{figure}[htbp]
\begin{center}
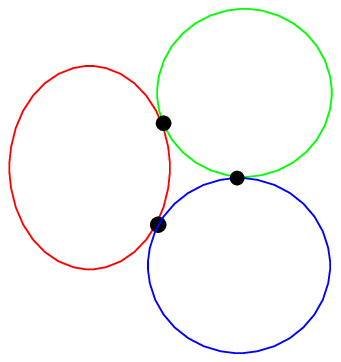

\caption{Example of a graph that is not an $(A_1, A_2, A_3, 4)$-graph.}
\label{fig:exnograph}
\end{center}
\end{figure}

\begin{figure}[htbp]
\begin{center}
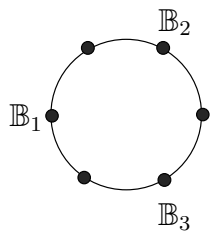

\caption{The dual graph of the graph in Figure~\ref{fig:exnograph}.}
\label{fig:dualgraph}
\end{center}
\end{figure}

\end{example}

\begin{definition}
A \emph{marked} $(\mathcal{A},n)$\emph{-graph} $(\Gamma, \phi)$
is a graph $\Gamma$ of rank $n$ equipped with a homotopy equivalence
$\phi: R_n(\mathcal{A}) \rightarrow \Gamma$ such that $(\Gamma,
\phi \circ \underline{k})$ is an $(\mathcal{A},n)$-graph.
\newline The map $\phi$ is called the \emph{marking}.
\end{definition}
The marking induces an isomorphism $\phi_{*}: F_n \rightarrow
\pi_1(\Gamma, \phi(v))$.
\begin{definition}
A \emph{marked metric $(\mathcal{A},n)$-graph} $(\Gamma,\phi)$
is a marked graph $(\Gamma, \phi)$ such that each edge $e$ in
$\Gamma$ has positive real length $l(e)$.
\end{definition}
\begin{definition} \label{outspace}
The \emph{relative outer space} $\mathrm{CV}_n(A_1,\ldots, A_k)$ (or
$\mathrm{CV}_n(\mathcal{A})$) is the space of equivalence classes of
marked metric $(\mathcal{A},n)$-graphs
where
        \begin{enumerate}
        \item the sum of all lengths of the edges in $\Gamma \setminus
        \{ \phi(C_1^1), \ldots, \phi(C_{s(k)}^{k}) \}$
        is $1$ (relative volume $1$) and
        $$
        \sum_{e \subset \phi(C_i^j)} l(e)=1 \quad \forall \, i, j;
        $$
        \item $(\Gamma_1,\phi_1) \sim (\Gamma_2,\phi_2)$ if there is
        an isometry $h:\Gamma_1 \rightarrow \Gamma_2$ with $h$
        such that $h \circ \phi_1(C_i^j)=\phi_2(C_i^j)$, $\forall \, i,j$,
        and $h \circ \phi_1$ is homotopic to $\phi_2$ rel. $\bigcup_{i,j} C_i^j$.
        \end{enumerate}
\end{definition}
We will usually denote a point in $\mathrm{CV}_n(\mathcal{A})$ by $(\Gamma,\phi)$.
\newline There is a natural right action of $\mathrm{Out}(F_n;\mathcal{A})$ by
homeomorphisms of $\mathrm{CV}_n(\mathcal{A})$: let $X=(\Gamma, \phi) \in
\mathrm{CV}_n(\mathcal{A})$, let $\Psi$ be a relative outer automorphism
and consider a map $\psi:R_n(\mathcal{A}) \rightarrow
R_n(\mathcal{A})$ such that $[\psi_*]=\Psi$ and which is
the identity on $\bigcup_{i,j} C_i^j$. Define
$$
X \cdot \Psi =(\Gamma,\phi) \cdot \Psi = (\Gamma, \phi \circ \psi).
$$
We can define a topology on $\mathrm{CV}_n(\mathcal{A})$ by varying the
lengths of the edges exactly as for outer space (see \cite{HV} for
the definition in the case of outer space).
\newline Because we suppose that the relative volume is $1$ and the
sum of the lengths of the edges in each cycle $\phi(C_i^j)$ is $1$, a
point $(\Gamma, \phi) \in \mathrm{CV}_n(\mathcal{A})$ is in the interior
of a polysimplex that is the product of the simplices $\Delta_i^j$
obtained by varying the lengths of the edges in each cycle
$\phi(C_i^j)$ and the simplex $\sigma$ given by varying the length
of the edges in $\Gamma \setminus \{ \phi(C_1^1), \ldots,
\phi(C_{s(k)}^{k}) \}$. Indeed, if $\Gamma$ has $N_{i,j}$ edges in
$\phi(C_i^j)$ of length $s_{i,j}^{1}, \ldots, s_{i,j}^{N_{i,j}}$ and
$N$ edges in $\Gamma \setminus \{ \phi(C_1^1), \ldots,
\phi(C_{s(k)}^{k}) \}$ of length $t_1, \ldots, t_N$, then
$$
\left.
  \begin{array}{ccc}
    0<s_{i,j}^{k}<1, \, \forall \, i,j,k  & \mbox{ and } & \sum_{k=1}^{N_{i,j}}  s_{i,j}^{k}=1, \\
    0<t_m<1, \, \forall \, m & \mbox{ and } & \sum_{m=1}^{N} t_m=1. \\
  \end{array}
\right.
$$
Let $\Delta_i^j$ be the open simplex determined by varying the
$s_{i,j}$'s and $\sigma$ be the open simplex obtained by varying the
$t$'s. Define
$$
P\Delta = \Delta_1^1 \times \cdots \times \Delta_{s(k)}^{k} \times
\sigma.
$$
Changing the length of the edges in $\Gamma$ gives the open
polysimplex $P\Delta$.
\begin{example}\label{expoly}
Consider $\mathrm{Out}(F_5;A_1,A_2)$, where $F_5 = <a,a',b,b',c>$,
$A_1=<a,a'>$ and $A_2=<b,b'>$, and consider the point $(\Gamma,\phi)
\in \mathrm{CV}_5(A_1,A_2)$ in Figure~\ref{fig:expoly1}.

\begin{figure}[htbp]
\begin{center}
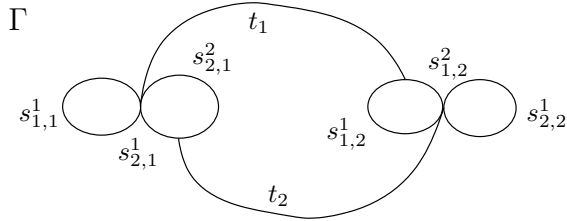

\caption{The point $(\Gamma,\phi) \in \mathrm{CV}_5(A_1,A_2)$ in
Example~\ref{expoly}.} \label{fig:expoly1}
\end{center}
\end{figure}

The open polysimplex given by varying the length of the edges of
$\Gamma$ is the open cube $\Delta_1^1 \times \Delta_2^1 \times
\Delta_1^2 \times \Delta_2^2 \times \sigma \cong \Delta_2^1 \times
\Delta_1^2 \times \sigma$ (see Figure~\ref{fig:excube1}).

\begin{figure}[htbp]
\begin{center}
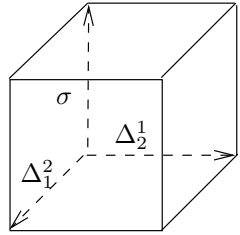

\caption{The open polysimplex $\Delta_2^1 \times \Delta_1^2 \times
\sigma$ in Example~\ref{expoly}.} \label{fig:excube1}
\end{center}
\end{figure}

\end{example}
Note that $\mathrm{Out}(F_n;\mathcal{A})$ acts properly and
discontinuously on $\mathrm{CV}_n(\mathcal{A})$ and that the stabilizer
of any marked $(\mathcal{A},n)$-graph $(\Gamma, \phi)$ is
isomorphic to the subgroup of isometries of $\Gamma$ which fixes the
wedge cycles, hence it is finite.
\newline For an $(\mathcal{A},n)$-graph $(\Gamma, \underline{l})$, let
$\widehat{\Gamma}$ be the graph of rank $n- \sum_{i=1}^{k} s(i)$
obtained from $\Gamma$ by collapsing the wedge cycles $\mathbb{B}_1,
\ldots, \mathbb{B}_k$ to points. Let $e$ be an edge of $\Gamma$
that does not define a loop in $\Gamma$ or in $\widehat{\Gamma}$.
Then the composition with the edge collapse $\mathrm{col}:\Gamma \rightarrow
\Gamma/e$ induces embeddings
$$
l_j/e: \bigvee_{i=1}^{s(j)} S^1 \rightarrow \Gamma/e
$$
such that $(\Gamma/e, \underline{l}/e)$ is again an $(\mathcal{A},n)$-graph.
By an edge collapse in an $(\mathcal{A},n)$-graph,
we will always mean the collapse satisfying the above
hypothe\-sis. For a marked $(\mathcal{A},n)$-graph $(\Gamma, \phi)$, the marking of
the collapsed graph is the composition $\mathrm{col} \circ \phi$.
\newline $F$ is a forest in a graph $\Gamma$ if it is a union of
edges in $\Gamma$ that does not contain any loop in
$\Gamma$ or in $\widehat{\Gamma}$. A forest collapse in an
$(\mathcal{A},n)$-graph $(\Gamma, \underline{l})$ is a sequence
of edge collapses, where the edges that are collapsed are the edges
in the forest. We denote the collapsed graph by $(\Gamma/F,
\underline{l}/F)$.
\newline We define a poset structure on the set of marked $(\mathcal{A},n)$-graphs by
saying that $(\Gamma_1, \phi_1) \leq (\Gamma_2, \phi_2)$ if there is
a forest $F$ in $\Gamma_2$ such that $(\Gamma_2/F, \phi_2/F)$ is
equivalent to $(\Gamma_1, \phi_1)$.
\newline We denote by $S_n(A_1,\ldots, A_k)$ (or $S_n(\mathcal{A})$) the geometric
realization of that poset and we call it the \emph{relative spine}
of the relative outer space.

\begin{example} \label{ex1}
Consider $n=2$, $F_2=<a,b>$ and $A=<a>$.
\newline In that case, $\mathrm{Out}(F_2;A)$ is isomorphic to the infinite dihedral
group $D_{\infty}$. Indeed, if $f \in \mathrm{Aut}(F_2)$ and $f(a)=a$, then
$f(b)=a^n b^{\varepsilon} a^m$, where $n$, $m \in \Z$ and
$\varepsilon \in \{ \pm 1 \}$.
\newline After conjugating by a power of $a$, we have $f(b)=a^N
b$ or $f(b)=a^N b^{-1}$. The map
$$
\left.
    \begin{array}{ccc}
      \mathrm{Out}(F_2; A) & \rightarrow & \Z_2 \\
      f & \mapsto & \varepsilon \\
    \end{array}
  \right.
$$
has kernel $\Z$, so we get
$$
\left.
    \begin{array}{ccccccccc}
      1 & \rightarrow & \Z & \rightarrow & \mathrm{Out}(F_2; A) & \rightarrow & \Z_2 & \rightarrow & 1, \\
    \end{array}
  \right.
$$
i.e. $\mathrm{Out}(F_2; A) \cong \Z \rtimes \Z_2 \cong D_{\infty}$.
\newline The relative spine $S_2(A)$ is homeomorphic to the
simplicial complex in Fi\-gu\-re~\ref{fig:spineex1}.

\end{example}

\begin{figure}[htbp]
\begin{center}
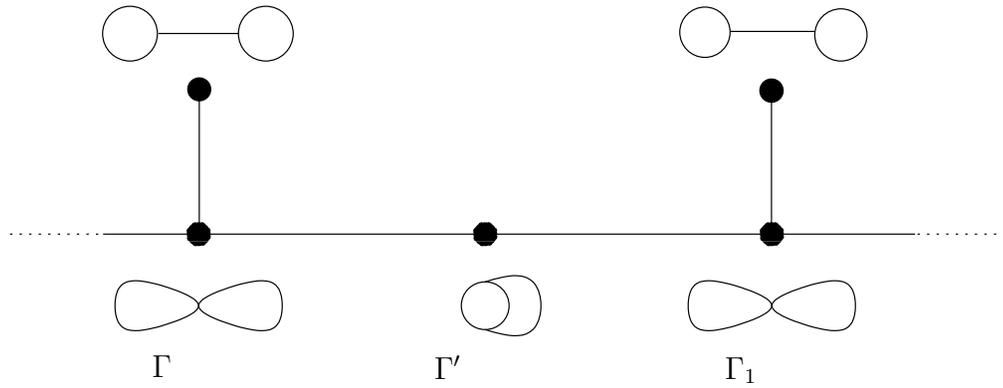

\caption{The spine $S_2(A)$ in Example~\ref{ex1} where $(\Gamma,
id)$ is the marked graph with identity marking, $(\Gamma_1, \phi_1)$
has the marking given by $\phi_1(a)=a$, $\phi_1(b)=ab$. In $\Gamma$
and $\Gamma_1$ both the edges have length 1, while in $\Gamma'$ the
right edge has length $1$ (it corresponds to $\phi'(e_1)$) and the
other edges have length $\frac{1}{2}$.} \label{fig:spineex1}
\end{center}
\end{figure}

\begin{remark}
We can define the reduced relative spine as the subset of
the geometric realization of the poset structure described
previously, containing only the $(\mathcal{A},n)$-graphs
with no separating edges. In Example~\ref{ex1} the reduced relative
spine is homeomorphic to a line.
\end{remark}

Notice that $\mathrm{CV}_n(\mathcal{A})$ is not a polysimplicial complex,
because some of the faces are missing, but the relative spine
$S_n(\mathcal{A})$ is a simplicial complex.
\newline Given a polysimplex $P \Delta$ we define its barycentric subdivision
in the following way. Consider the centroid of each face or polyface (i.e.
product of faces) of the polysimplex. Those will be the vertices of the
barycentric subdivision and we will call them barycentric subdivision
vertices (BSV). Now, for each $n>0$ and $n$-face or $n$-polyface $F$,
connect the centroid with each BSV in the $(n-1)$-faces (or polyfaces)
of $\partial F$ (see Figure~\ref{fig:barycentricsq}).

\begin{figure}[htbp]
\begin{center}
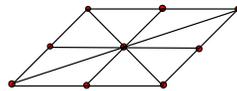

\caption{Barycentric subdivision of the polysimplex given by the product of
two 1-simplices. The dots are the vertices of the barycentric
subdivision.} \label{fig:barycentricsq}
\end{center}
\end{figure}

Note that if $\Delta_j$ is a simplex face in $P \Delta$, then the
restriction of the barycentric subdivision to $P \Delta_{| \Delta_j}$
is the (standard) barycentric subdivision of $\Delta_j$.
\newline There is a natural embedding of
$S_n(\mathcal{A})$ into $\mathrm{CV}_n(\mathcal{A})$ that sends each
vertex to the centroid of the corresponding open polysimplex and
each $d$-simplex to the convex hull of the corresponding centroids
(see Figure~\ref{fig:barycentric} for an example of barycentric
subdivision in $\mathrm{CV}_5(A_1,A_2)$ of Example~\ref{expoly}).

\begin{figure}[htbp]
\begin{center}
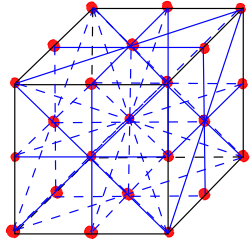

\caption{Barycentric subdivision of the polysimplex given in
Example~\ref{expoly}, where the red dots are the vertices of the simplices
in $S_5(A_1,A_2)$.} \label{fig:barycentric}
\end{center}
\end{figure}

$\mathrm{CV}_n(\mathcal{A})$ deformation retracts onto $S_n(\mathcal{A})$
in the following way. The vertices of $S_n(\mathcal{A})$
correspond to open polysimplices of $\mathrm{CV}_n(\mathcal{A})$ and a
$d$-simplex is a chain of $d+1$ open polysimplices, each of which is
a face of the next. By pushing within each open polysimplex of $\mathrm{CV}_n(\mathcal{A})$
away from the missing faces we have a deformation retraction from
$\mathrm{CV}_n(\mathcal{A})$ to $S_n(\mathcal{A})$. In other words,
$\mathrm{CV}_n(\mathcal{A})$ is the union of open polysimplices in a
polysimplicial complex $X$. Note that $S_n(\mathcal{A})$ is the maximal
full subcomplex of the barycentric subdivision of $X$ that is
disjoint from $X \setminus \mathrm{CV}_n(\mathcal{A})$. Collapsing every
simplex in the barycentric subdivision of $X$ to the face of the
simplex contained in $S_n(\mathcal{A})$ gives a deformation
retraction of $\mathrm{CV}_n(\mathcal{A})$ onto $S_n(\mathcal{A})$.
\newline The action of $\mathrm{Out}(F_n;\mathcal{A})$ extends to a simplicial
action on the relative spine $S_n(\mathcal{A})$.

\section{Contractibility of \boldmath $\mathrm{CV}_n(\mathcal{A})$}

The whole section is dedicated to the proof of the following theorem.
\begin{theorem} \label{contract}
The relative outer space $\mathrm{CV}_n(\mathcal{A})$ is contractible.
\end{theorem}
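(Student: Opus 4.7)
The plan is to adapt the classical Morse-theoretic proof of Culler--Vogtmann (that the spine of outer space is contractible) to the relative setting. The discussion preceding the theorem already exhibits a deformation retraction of $\mathrm{CV}_n(\mathcal{A})$ onto the relative spine $S_n(\mathcal{A})$, so it suffices to prove that $S_n(\mathcal{A})$ is contractible.

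Designate the basepoint $v_0 \in S_n(\mathcal{A})$ to be the vertex represented by the relative rose $(R_n(\mathcal{A}), \id)$ with identity marking. For every vertex $(\Gamma, \phi)$ of the spine, define a \emph{norm}
$$
\|(\Gamma,\phi)\| \;=\; \sum_{\alpha \in \mathcal{C}} \len_\Gamma(\phi_*(\alpha)),
$$
where $\mathcal{C}$ is a carefully chosen finite set of conjugacy classes in $F_n$ --- a relative analogue of the Culler--Vogtmann "candidates" --- including (i) the conjugacy classes of the generators $y_i^j$ of each $A_i$, so that every wedge cycle is detected, and (ii) enough additional classes so that every non-rose marked $(\mathcal{A},n)$-graph is witnessed by at least one admissible norm-decreasing fold in $\mathcal{C}$.

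The inductive heart of the argument is to show that at each vertex $(\Gamma, \phi) \neq v_0$ there exists an \emph{admissible fold} --- an identification of two edges of $\Gamma$ sharing an endpoint --- that produces an adjacent vertex of $S_n(\mathcal{A})$ of strictly smaller norm. By \emph{admissible} we mean that the resulting graph is still an $(\mathcal{A},n)$-graph: the wedge cycles $\mathbb{B}_j$ must remain embedded, intersect pairwise in at most a point, and have dual graph a forest. Once this is established, the standard Morse-theoretic poset argument (sublevel sets $\{\|\cdot\| \leq c\}$ deformation retract onto smaller sublevel sets until only $\{v_0\}$ remains, with the descending link at each vertex being a cone) yields contractibility of $S_n(\mathcal{A})$, and hence of $\mathrm{CV}_n(\mathcal{A})$.

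The principal obstacle is precisely the admissibility constraint: in the relative setting a fold that strictly decreases the norm in $\mathrm{CV}_n$ might push two wedge cycles into sharing more than a point, absorb an outside edge into some $\mathbb{B}_j$, or create a loop in the dual graph. The main work is a case analysis that separates edges of $\Gamma$ into those belonging to some wedge cycle and those not, and shows that whenever every candidate fold would violate the $(\mathcal{A},n)$-graph axioms, the graph is already at $v_0$ (equivalently, one must produce an admissible \emph{forest collapse} when no admissible fold exists). One must simultaneously calibrate $\mathcal{C}$ to witness all such admissible moves while keeping it finite, and verify that when several admissible descending moves are available at the same vertex, they share a common lower bound in the poset, so the descending link is contractible. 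Pulling this together gives the deformation retraction of $S_n(\mathcal{A})$ to the point $v_0$ and completes the proof.
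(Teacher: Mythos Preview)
Your proposal is a plan rather than a proof: the substantive steps --- the explicit choice of the candidate set $\mathcal{C}$, the admissibility case analysis for folds, and the verification that descending links are contractible --- are all announced but not carried out. The Culler--Vogtmann argument you are invoking does not proceed via folds and norm levels in quite the way you describe; it goes through ideal edges, star graphs, and a delicate induction on roses ordered by a well-ordering of norm tuples. Adapting that machinery to respect the $(\mathcal{A},n)$-graph constraints is the entire difficulty, and your proposal only names it.

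The paper takes a different and much shorter route that avoids this case analysis altogether. It works inside the full spine $S_n$ and appeals to the already-proved Culler--Vogtmann theorem that the minset $\mathrm{Minset}(f)$ of any finite tuple of length functions is contractible. The tuple $f$ is chosen explicitly: for each factor $A_j$ one takes the generators $y_i^j$ together with the products $y_i^j y_l^j$ and $y_i^j \overline{y}_l^j$ for $i<l$. Lemma~\ref{starose} shows that minimizing each $f_j$ forces the marking to embed the $j$-th wedge cycle, so $S_n(\mathcal{A})$ embeds in $\mathrm{Minset}(f)$; the inclusion is strict because in $\mathrm{Minset}(f)$ two wedge cycles may overlap in a tree rather than a single point (Lemma~\ref{lemmaimpo}). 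The remaining step (Theorem~\ref{equal}) is a one-line Poset Lemma argument: collapsing each component of $\bigcup_{j\neq j'}(\mathbb{B}_j \cap \mathbb{B}_{j'})$ is a poset map retracting $\mathrm{Minset}(f)$ onto $S_n(\mathcal{A})$, so the latter inherits contractibility.

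In short, the paper outsources the hard combinatorics to the existing minset theorem and only has to supply a short structural lemma plus the Poset Lemma. Your route would amount to reproving the minset theorem from scratch in the relative setting; that may well be feasible, but nothing in your proposal does it.
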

Because there is a deformation retraction from the relative outer
space $\mathrm{CV}_n(\mathcal{A})$ to its spine $S_n(\mathcal{A})$,
it is enough to prove that the relative spine is contractible.
First we prove that if $k=1$, $S_n(\mathcal{A})$ is contractible.
\newline Recall from \cite{HV} that the spine $S_{n}$ of the outer space
$\mathrm{CV}_n$ is a poset of marked graphs of rank $n$, where the marking is
given by a homotopy equivalence from the rose $R_n= \bigvee_{1}^{n}
S^1$. $S_{n}$ is contractible and admits an action of $\mathrm{Out}(F_{n})$.
Edge collapses induce a poset structure on $S_{n}$ with
minimal elements the \emph{reduced} marked graphs, that is roses.
\newline Let $W$ be the set of conjugacy classes of elements in
$F_n$. Let $w_1, \ldots, w_m$ be elements in $W$.
Following \cite{CV}, we define the function $f_{w_i}$ from the set of roses to $\R$ by
$f((R, \phi)) = n l(w_i)$, where $l$ is the length function on $F_n$
associated to $R$. The minset of $f_{w_i}$ is
$$
\mathrm{Minset}(f_{w_i}) = \bigcup_{f_{w_i} \text{ is min at } (R, \phi)}
\mathrm{st}((R, \phi)),
$$
where $\mathrm{st}(X)$ is the star of the point $X$ in $S_n$.
\newline Now, we consider the function $f$ from the set of roses to $\R^m$,
$f= (f_{w_1}, \ldots, f_{w_m})$ and we consider $\R^m$ equipped with
the lexicographic order.
\newline Define $\Lambda$ as the set of roses $(R,\phi)$ that are minimum
in $f= (f_{w_1}, \ldots, f_{w_m})$ with respect to the lexicographic order. Let
$$
\mathrm{Minset}(f) = \bigcup_{(R,\phi) \in \Lambda} \mathrm{st}((R,\phi)).
$$
\begin{remark}\label{inclusion}
If $(R_i,\phi_i)$ are roses in $S_n$ for $i=1,2$, $f((R_1,\phi_1))
\leq f((R_2,\phi_2))$, $(R_2,\phi_2) \in \Lambda$, then
$(R_1,\phi_1) \in \Lambda$.
\end{remark}
A useful lemma that we will need in the sequel is the Poset Lemma.
\begin{theorem}[Poset Lemma]
Let $X$ be a poset and $f: X \rightarrow X$ be a poset map with the
property that $f(x) \leq x$ for all $x \in X$ (or $f(x) \geq x$ for
all $x \in X$). Then $f(X)$ is a deformation retract of $X$.
\end{theorem}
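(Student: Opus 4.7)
The plan is to construct an explicit simplicial homotopy between $\id_{|X|}$ and $|f|$ on the geometric realization of the order complex of $X$. I treat the case $f(x) \leq x$ for all $x$; the other case is formally dual.

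For each $d$-simplex $\sigma = (x_0 < x_1 < \cdots < x_d)$ of $|X|$, I would use the staircase triangulation of the prism $\sigma \times [0,1]$ into $d+1$ top-dimensional simplices $S_0, \ldots, S_d$, where $S_i$ has vertex set $\{(x_0,1), (x_1,1), \ldots, (x_i,1)\} \cup \{(x_i,0), (x_{i+1},0), \ldots, (x_d,0)\}$. I would then send $(x_j, 1) \mapsto x_j$ and $(x_j, 0) \mapsto f(x_j)$ and extend linearly. The key check is that the image of $S_i$ lands in a genuine simplex of $|X|$, namely the one spanned by the totally ordered set
\[
f(x_0) \leq f(x_1) \leq \cdots \leq f(x_i) \leq x_i < x_{i+1} < \cdots < x_d,
\]
whose first block is non-decreasing because $f$ is a poset map, whose central inequality is the hypothesis, and whose last block is inherited from $\sigma$. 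Repeated vertices (when $f$ fixes or collapses some $x_j$) simply collapse the simplicial image harmlessly.

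Since the staircase triangulations are functorial in $d$, the local pieces match along common faces of adjacent chains and assemble into a continuous homotopy $H : |X| \times [0,1] \to |X|$ with $H_0 = \id$ and $H_1 = |f|$. Because $H_1$ takes values in $|f(X)|$, the map $r := |f| : |X| \to |f(X)|$ is a retraction; and for a chain $\sigma$ already contained in $f(X)$, every vertex of every prism piece is either of the form $x_j \in \sigma \subseteq f(X)$ or $f(x_j) \in f(X)$, so $H$ carries $|f(X)| \times [0,1]$ into $|f(X)|$. This yields the desired deformation retraction.

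The main obstacle is the combinatorial bookkeeping that the prism pieces glue consistently --- in particular along degenerate sub-simplices produced when $f$ identifies comparable elements of a chain, and across the boundary faces shared by adjacent top-dimensional simplices of $|X|$. This is a standard but careful verification that reduces to the naturality of the staircase decomposition of $\Delta^d \times I$ under face inclusions.
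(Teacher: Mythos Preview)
The paper does not supply its own proof of this lemma; it simply refers to Quillen \cite{Q}. Your construction is essentially Quillen's argument: regard $(x,0)\mapsto f(x)$, $(x,1)\mapsto x$ as a poset map $X\times\{0<1\}\to X$ and geometrically realize it as a homotopy.

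Two slips need correcting. First, your staircase piece $S_i$ and your vertex map do not produce the chain you display. With $S_i=\{(x_0,1),\ldots,(x_i,1),(x_i,0),\ldots,(x_d,0)\}$ and $(x_j,1)\mapsto x_j$, $(x_j,0)\mapsto f(x_j)$, the image of $S_i$ is $\{x_0,\ldots,x_i\}\cup\{f(x_i),\ldots,f(x_d)\}$, which need not be totally ordered: in the poset with relations $a<b<d$, $a<c<d$ and with $f(d)=c$, $f=\id$ otherwise, the chain $a<b<d$ at $i=1$ has image $\{a,b,c\}$. (Indeed your $S_i$ is not even a chain in $X\times\{0<1\}$.) Swapping $0$ and $1$ in the definition of $S_i$ fixes this, and then the image is exactly the chain $f(x_0)\le\cdots\le f(x_i)\le x_i<\cdots<x_d$ you wrote. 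Second, $|f|$ is not in general a retraction onto $|f(X)|$, since $f$ need not restrict to the identity on $f(X)$: take $X=\{a<b<c\}$ with $f(a)=a$, $f(b)=a$, $f(c)=b$, so $f(X)=\{a,b\}$ but $f(b)=a$. What your homotopy actually establishes is that the inclusion $|f(X)|\hookrightarrow|X|$ is a homotopy equivalence; since this is a subcomplex inclusion of CW complexes, the homotopy extension property then yields the deformation retraction.
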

See \cite{Q} for a proof of the Poset Lemma.
\newline Let $(\Gamma, \phi)$ be a marked graph in $S_{n}$ and let $v$ be
a vertex of $\Gamma$. Formally, the notion of \emph{ideal edges} is
defined as in \cite{CV} in terms of partitions. We can think of an
ideal edge $\gamma$ at the vertex $v$ as a partition of the set $E_v$
of half edges of $\Gamma$ terminating at $v$ such that
the blow-up $\Gamma^{\gamma}$ in $v$ is again in $S_{n}$, where
$\Gamma^{\gamma}$ is the graph obtained by pulling the half edges in
$\gamma$ away from $v$ creating a new vertex $v(\gamma)$, a new edge
$\gamma$ that goes from $v(\gamma)$ to $v$ and each half edge $e
\subset \gamma$ is attached to $v(\gamma)$ instead of $v$. Note that
the graph $\Gamma$ can be reobtained by $\Gamma^{\gamma}$ collapsing
$\gamma$.
\newline An \emph{ideal forest} in a reduced marked graph
is a sequence of ideal edges. One can define a poset structure on
the set of ideal forests of a rose $(R, \phi)$ such that the blowing
up induces an isomorphism between that poset and the star of $(R,
\phi)$ in $S_{n}$ (see \cite{CV}). As for $S_n$, we can define ideal edges and
ideal forests for $S_n(\mathcal{A})$.
\newline Let $A$ be a subset of $E_v$. We will denote by $\overline{A}$ the
complement of $A$.
\begin{definition}
Two subsets $A$ and $B$ of $E_v$ are \emph{compatible} if one of the sets
$A \cap B$, $\overline{A} \cap B$, $A \cap \overline{B}$, $\overline{A}
\cap  \overline{B}$ is empty.
\end{definition}
The upper link of a marked $(\mathcal{A},n)$-graph $(\Gamma,
\phi)$ in $S_n(\mathcal{A})$ is a set of marked $(\mathcal{A},n)$-graphs
$(\Gamma', \phi')$ that collapse to $(\Gamma,
\phi)$. Such marked $(\mathcal{A},n)$-graphs are said to be
obtained by blowing up vertices of $\Gamma$ into trees. Notice that
a set of ideal edges is compatible if it corresponds to a
tree.
\newline Let $B(v)$ be the complex whose vertices are ideal edges at $v$ and
whose $i$-simplices are sets of $i+1$ compatible ideal edges.
\begin{definition}
We say that an ideal edge $\gamma$ at a vertex $v \in \Gamma$ is
\emph{legal} if $\Gamma^{\gamma} \in S_n(\mathcal{A})$. We
denote the subcomplex of $B(v)$ spanned by legal ideal edges by
$L(v)$.
\end{definition}
\begin{remark}
An ideal edge is legal if and only if it separates at
most one pair of half edges contained in a wedge cycle
$\mathbb{B}_i$. Indeed, if it separates two pairs of half edges one in
$\mathbb{B}_i$ and the other one in $\mathbb{B}_j$, then it blow ups
to an edge in $\mathbb{B}_i \cap \mathbb{B}_j$ and that contradicts
the definition of marked $(\mathcal{A},n)$-graph.
\end{remark}
We have the following remarkable result.
\begin{theorem}\label{contra}
$\mathrm{Minset}(f)$ is contractible.
\end{theorem}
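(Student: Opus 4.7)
The plan is to apply the Poset Lemma to the natural poset structure on $\mathrm{Minset}(f) \subseteq S_n$ induced by forest collapse. The goal is to construct a poset map $\rho \colon \mathrm{Minset}(f) \to \mathrm{Minset}(f)$ satisfying $\rho(x) \leq x$ and having contractible image; the Poset Lemma will then deliver a deformation retraction of $\mathrm{Minset}(f)$ onto this image.

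For each marked graph $(\Gamma, \phi) \in \mathrm{Minset}(f)$, let $F_{(\Gamma,\phi)} \subseteq \Gamma$ be the union of all edges $e$ whose single-edge collapse $(\Gamma / e, \phi / e)$ still lies in $\mathrm{Minset}(f)$. The main technical claim I would verify is that $F_{(\Gamma,\phi)}$ is genuinely a forest, containing no loop in $\Gamma$ or in $\widehat{\Gamma}$, so that $\rho(\Gamma, \phi) := (\Gamma / F_{(\Gamma,\phi)}, \phi / F_{(\Gamma,\phi)})$ is a well-defined marked graph in $S_n$. This should follow from a careful analysis of how each length function $f_{w_i}$ behaves under edge collapse: under the lex order only one component of $f$ is ``active'' in determining minimality at a given rose, so a hypothetical loop of individually-collapsible edges would violate the minimality of that single component, a contradiction reducing to the classical Culler--Vogtmann single-word argument.

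Once $\rho$ is well-defined, $\rho(x) \leq x$ holds by construction, and the poset-map property is a routine case check on how compositions of collapses interact. The image $\rho(\mathrm{Minset}(f))$ then consists of those roses in $\Lambda$ from which no further blow-up remains in $\mathrm{Minset}(f)$. To show this image is contractible, I plan to invoke Whitehead's peak-reduction theorem: any two roses in $\Lambda$ are connected by a sequence of single Whitehead moves (each realized by a single-simplex blow-up-and-collapse) that stays inside $\mathrm{Minset}(f)$, and a nerve-theoretic argument — or an iterated application of the Poset Lemma — upgrades path-connectivity to full contractibility.

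The main obstacle I anticipate is verifying the forest claim. A priori, several edges could each be individually collapsible without leaving $\mathrm{Minset}(f)$ while collectively forming a loop in $\Gamma$ or in $\widehat{\Gamma}$. Ruling this out requires combining the monotonicity of length functions under folding with a careful use of the hierarchical structure of the lex order, and this is the technical heart of the argument.
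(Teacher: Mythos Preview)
Your central forest claim is false, already in the single-word case. Take $n=2$ and the single word $w = y_1^1$ (so $f = f_{w}$, corresponding to $k=1$, $s(1)=1$). Let $\Theta$ be the theta graph with edges $e_1, e_2, e_3$, marked so that $y_1^1$ is represented by the cycle $e_1 \bar{e}_2$. Collapsing either $e_1$ or $e_2$ produces a rose in which $y_1^1$ has length~$1$, hence a rose in $\Lambda \subset \mathrm{Minset}(f)$; collapsing $e_3$ produces a rose in which $y_1^1$ has length~$2$, hence a rose \emph{not} in $\mathrm{Minset}(f)$. Thus $F_{(\Theta,\phi)} = \{e_1, e_2\}$, which is a cycle, not a forest. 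The phenomenon is structural: any $(\Gamma,\phi) \in \mathrm{Minset}(f)$ lies above some rose $(R,\psi)\in\Lambda$, and every edge of the maximal tree collapsing $\Gamma$ to $R$ is individually collapsible inside $\mathrm{Minset}(f)$; so $F_{(\Gamma,\phi)}$ always contains a maximal tree, and your forest claim would force the rose of $\Lambda$ below $(\Gamma,\phi)$ to be unique --- which it is not. Your proposed reduction ``to the classical Culler--Vogtmann single-word argument'' therefore cannot succeed, since the single-word case already furnishes the counterexample.

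The paper itself does not argue directly: it simply invokes the main contractibility theorem of Culler--Vogtmann \cite{CV} (alternatively Jensen--Wahl \cite{JW}) for the minset of a finite collection of words. That argument runs in the opposite direction from yours: rather than retracting $\mathrm{Minset}(f)$ downward, one builds $S_n$ upward from $\mathrm{Minset}(f)$ by adjoining stars of roses in order of increasing norm, and shows via Whitehead's peak-reduction lemma and an analysis of ideal edges that each star is attached along a contractible subcomplex. Since $S_n$ is contractible, $\mathrm{Minset}(f)$ is as well. Peak reduction does enter, as you anticipated, but it controls the attaching links in this inductive build-up rather than the image of a global collapse map.
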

The proof of that theorem follows from the following theorem in
\cite{CV}.
\begin{theorem}[Culler-Vogtmann]
Let $W'= \{ w_1, \ldots, w_m \}$, where $w_i \in W$ for all $i$.
Then $\mathrm{Minset}(f)$ is a contractible subcomplex of $S_n$, the action
is proper and the quotient $\mathrm{Minset}(f) / Stab(W')$ is finite.
\end{theorem}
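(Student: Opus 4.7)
The theorem makes three assertions about the subcomplex $\mathrm{Minset}(f) \subset S_n$ --- contractibility, properness of the $\mathrm{Stab}(W')$-action, and finiteness of the quotient --- and I would handle them separately, with the contractibility step as the main obstacle.

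Properness is inherited from the ambient action. Culler and Vogtmann already proved that $\mathrm{Out}(F_n)$ acts on $S_n$ properly with finite point stabilizers, so any invariant subcomplex enjoys the same property for any subgroup. It remains to verify that $\mathrm{Minset}(f)$ is $\mathrm{Stab}(W')$-invariant: for $\Psi \in \mathrm{Stab}(W')$ each conjugacy class $w_i$ is fixed, so $f_{w_i}((R,\phi)\cdot \Psi) = f_{w_i}((R,\phi))$ for every rose $(R,\phi)$, and hence the lexicographic minimum set $\Lambda$ together with its star-union $\mathrm{Minset}(f)$ are preserved.

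For contractibility I would run a Morse/Poset-Lemma argument in the spirit of Whitehead's peak reduction. Granting the key step --- that from any rose $(R,\phi) \notin \Lambda$ some sequence of elementary blow-up/collapse moves in $S_n$ reaches a rose with strictly smaller $f$-value in lex order --- one builds a poset map from $S_n$ to itself with image $\mathrm{Minset}(f)$ that satisfies the hypothesis of the Poset Lemma, and concludes that $\mathrm{Minset}(f)$ is a deformation retract of the contractible complex $S_n$. Classical Whitehead handles a single length function $l(w_i)$; the lex tuple forces a coordinate-by-coordinate induction, since a move lowering $l(w_i)$ may raise $l(w_j)$ for some $j>i$. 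One therefore runs peak reduction for $l(w_1)$ first, then restricts to the $l(w_1)$-minimizing subcomplex and peak-reduces $l(w_2)$ inside the $\mathrm{Stab}(w_1)$-equivariant picture, and so on. This layering, which amounts to a \emph{relative} Whitehead peak-reduction theorem inside a nested chain of stabilizers, is the technical heart of the proof and the genuinely hard part.

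Finally, finiteness of the quotient reduces to a combinatorial bound. Every rose in $\Lambda$ realizes the same lex-minimum of $(l(w_1),\ldots,l(w_m))$, so each $w_i$ has a fixed cyclically reduced length in the corresponding basis, and the number of combinatorial types of rose achieving this minimum is therefore finite. A further application of peak reduction shows that any two such roses are related by an element of $\mathrm{Stab}(W')$ up to finitely many ambiguities, so $\Lambda/\mathrm{Stab}(W')$ is a finite set. Since each star in $S_n$ is a finite subcomplex of bounded dimension, the quotient $\mathrm{Minset}(f)/\mathrm{Stab}(W')$ is a finite complex, completing the sketch.
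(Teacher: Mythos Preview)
The paper does not prove this theorem at all: it is quoted as a result of Culler--Vogtmann from \cite{CV}, with a pointer to an alternative argument in \cite{JW}. So there is no in-house proof to compare against; the relevant benchmark is the original argument in \cite{CV}.

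Your sketch identifies the correct engine --- Whitehead's peak reduction / the Higgins--Lyndon factorization --- and your treatment of properness, invariance, and cofiniteness is fine. The genuine gap is in how you package contractibility. You claim that peak reduction yields a poset map $S_n \to S_n$ with $f(x)\le x$ to which the Poset Lemma applies, with image $\mathrm{Minset}(f)$. That cannot work as stated: roses are \emph{minimal} elements of the poset $S_n$, so there is no poset map sending a rose $(R,\phi)\notin\Lambda$ to anything strictly below it. A Whitehead move is a blow-up followed by a different collapse, which is a zig-zag in the partial order, not a monotone step; the Poset Lemma does not see it. Likewise the ``coordinate-by-coordinate lex induction'' you describe is not how \cite{CV} proceeds and would need its own justification, since restricting to the $l(w_1)$-minset and then peak-reducing $l(w_2)$ inside it requires knowing that Whitehead moves that fix $l(w_1)$ suffice --- this is essentially the relative peak-reduction statement you flag as hard, and it is not free.

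What \cite{CV} actually does is different in shape: one well-orders the roses by the norm $f$ (lex, then arbitrarily), and builds $S_n$ up from $\mathrm{Minset}(f)$ by attaching the stars $\mathrm{st}(R,\phi)$ in increasing order. The point is then that each star is attached along a contractible subcomplex of its boundary; this is where peak reduction (the Higgins--Lyndon theorem) enters, via an analysis of ideal edges and ``reductive'' ideal edges in the link. No single poset retraction is constructed; contractibility is obtained by an inductive gluing along contractible pieces. If you want to turn your outline into a proof, you should replace the Poset-Lemma step with this star-by-star attachment argument.
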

An alternative proof is given in Section $4$
of \cite{JW} (in the case of $G=\{ 1 \}$, $\| \Lambda \| =
\mathrm{Minset}(f)$ and $E_{n}^{G} = S_n$).
\begin{lemma}[\cite{FM}]\label{greengraphthm}
If $(\Gamma_1,\phi_1)$ and $(\Gamma_2,\phi_2)$ are two marked graphs
of rank $n$ and $f:\Gamma_1 \rightarrow \Gamma_2$ is a map linear on
edges such that the following diagram
$$
\xymatrix{
 \Gamma_1 \ar[r]^{f}  & \Gamma_2  \\
 R_{n}  \ar[u]^{\phi_1} \ar[ur]_{\phi_2} &  }
$$
commutes up to homotopy, then there is a subgraph of $\Gamma_1$ where the length of
the edges are multiplied by the Lipschitz constant $\mbox{Lip}(f)$ and the
length of all the edges not in the subgraph are multiplied by a
number strictly less than $\mbox{Lip}(f)$.
\end{lemma}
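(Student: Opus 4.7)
The plan is to identify the asserted subgraph as the \emph{tension subgraph} of $f$, namely the union of those edges of $\Gamma_1$ on which $f$ attains its maximal stretch factor; once that subgraph is correctly defined, the lemma reduces to two elementary observations.

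First, since $f$ is linear on edges and $\Gamma_1$ has only finitely many edges, for each edge $e$ of $\Gamma_1$ the ratio $\lambda(e) := l(f(e))/l(e)$ is a well-defined positive real number, and the maximum $K := \max_e \lambda(e)$ is attained on some edge. I would then verify the identity $\mathrm{Lip}(f) = K$. The inequality $\mathrm{Lip}(f) \geq K$ is immediate by taking two nearby points on an edge $e_0$ realizing the maximum. For the reverse inequality, given any $x,y \in \Gamma_1$ I would take a geodesic from $x$ to $y$, decompose it as a concatenation of segments $\gamma_i$ each contained in a single edge $e_i$, and estimate
\[
d\bigl(f(x),f(y)\bigr) \;\leq\; \sum_i l(f\circ\gamma_i) \;=\; \sum_i \lambda(e_i)\, l(\gamma_i) \;\leq\; K \sum_i l(\gamma_i) \;=\; K\, d(x,y),
\]
which is the standard decomposition argument for piecewise-linear maps between metric graphs.

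Next, I would define
\[
\Delta \;=\; \bigcup \bigl\{\, e \,:\, e \text{ is an edge of } \Gamma_1 \text{ with } \lambda(e) = K \,\bigr\},
\]
taken together with the endpoints of these edges. Since $\Delta$ is a union of closed edges of $\Gamma_1$, it is a subgraph. By construction, every edge in $\Delta$ is stretched by exactly $\mathrm{Lip}(f) = K$, while every edge of $\Gamma_1 \setminus \Delta$ satisfies $\lambda(e) < K$, which is precisely the conclusion of the lemma.

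The only step that is not a direct consequence of definitions is the computation of $\mathrm{Lip}(f)$ as the maximum of the edge-wise stretch factors; this is the point where the hypothesis that $f$ is linear on edges (and hence that the stretch factor is constant on each edge) is essential. The commutativity up to homotopy of the diagram with $R_n$ plays no direct role in the argument and simply ensures that $f$ is a legitimate ``difference of markings'' map — it is used only in the applications of the lemma rather than in its proof. I do not foresee any genuine obstacle, as the whole statement is essentially a convenient packaging of the definition of the tension subgraph.
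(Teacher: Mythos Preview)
Your argument is correct: the lemma is precisely the observation that for a piecewise-linear map between metric graphs the Lipschitz constant equals the maximal edge-stretch factor, and the tension subgraph $\Delta$ you define is exactly the subgraph asserted to exist. Note, however, that the paper does not give its own proof of this lemma; it is stated with a citation to Francaviglia--Martino \cite{FM} and used as a black box, so there is no in-paper proof to compare against---your write-up simply supplies the standard argument from that reference.
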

\begin{definition}
Let $(\Gamma_1,\phi_1)$ and $(\Gamma_2,\phi_2)$ be two marked graphs
of rank $n$. Given a map $f \sim \phi_2 \circ \phi_1^{-1}$ linear on edges, we
denote by $\Gamma_f$ the subgraph of $\Gamma_1$ whose edges
are maximally stretched by $\mbox{Lip}(f)$.
\end{definition}
\begin{definition}
Let $(\Gamma_1,\phi_1)$ and $(\Gamma_2,\phi_2)$ be two marked graphs
of rank $n$. A map $f \sim \phi_2 \circ \phi_1^{-1}$ linear on edges is \emph{not
optimal} if there is some vertex of $\Gamma_f$ such that all
the edges of $\Gamma_f$ terminating at that vertex have
$f$-image with a common terminal partial edge. Otherwise, $f$ is
called \emph{optimal}.
\end{definition}
A \emph{turn} in $(\Gamma,\phi)$ is an unordered pair of oriented
edges of $\Gamma$ originating at a common vertex. A turn is
\emph{nondegenerate} if it is defined by distinct oriented edges.
Otherwise, the turn is called \emph{degenerate}.
\newline A map $f: \Gamma \rightarrow
\Gamma$ induces a map $D f$ from the set of oriented
edges of $\Gamma$ to itself by sending an oriented
edge to the first oriented edge in its $f$-image
as long as no edges are collapsed.
We can think of $D f$ as a sort of derivative.
\newline $Df$ induces a map $T f$
on the set of turns in $\Gamma$.
\newline A turn is \emph{illegal} with respect to $f$ if
its image under
some iterate of $T f$ is degenerate. Otherwise, the turn
is called \emph{legal}. For properties of legal and illegal
turns see \cite{BH} or \cite{AK}.
\newline Remember that
$$
A_i=<y_1^i, \ldots, y_{s(i)}^{i}> \mbox{ and } F_n=<y_1^1, \ldots,
y_{s(k)}^{k}, x_1, \dots, x_{n-\sum_{i=1}^{k} s(i)}>.
$$
Consider the function $f$ given by $f=(f_{1}, f_{2}, \ldots,
f_{k})$, where $$f_j=(f_{w_1^j}, \ldots, f_{w_{s(j)}^{j}},
f_{w_{1,2}^{j}}, \ldots, f_{w_{i,l}^{j}}, \ldots,
f_{w_{i,\overline{l}}^{j}}, \ldots)$$ and
$$
\left.
  \begin{array}{llll}
    w_i^j & = & y_i^j, & \mbox{for } i=1, \ldots, s(k), \\
    w_{i,l}^{j} & = & y_i^j  y_{l}^{j}, & \mbox{for all } i<l, \,  i,l=1, \ldots, s(k), \\
    w_{i,\overline{l}}^{j} & = & y_i^j  \overline{y}_{l}^{j}, &  \mbox{for all } i<l, \,  i,l=1, \ldots, s(k). \\
  \end{array}
\right.
$$
\begin{lemma}\label{starose}
Consider $F_{s(j)}=<y_1^j, \ldots, y_{s(j)}^{j}>$. Then
$\mathrm{Minset}(f_j)$ consists of the star of a single rose in $S_{s(j)}$,
and hence is contractible.
\end{lemma}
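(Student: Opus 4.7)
The plan is to specialize the preceding Culler--Vogtmann theorem to the finite set $W' = \{y_i^j\}_i \cup \{y_i^j y_l^j\}_{i<l} \cup \{y_i^j \overline{y}_l^j\}_{i<l} \subset W$ inside $F_{s(j)} = \langle y_1^j, \ldots, y_{s(j)}^j \rangle$. That theorem already delivers contractibility of $\mathrm{Minset}(f_j) \subseteq S_{s(j)}$ for free, so the substantive content of this lemma is the sharper claim that this minset is the star of a single rose, equivalently that the set $\Lambda$ of lex-minimizing roses is a singleton.

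The natural candidate is the marked rose $R_j^* \in S_{s(j)}$ whose petals realize the free basis $\{y_1^j, \ldots, y_{s(j)}^j\}$ itself, with the marking given by $y_i^j \mapsto$ the $i$-th petal. A direct computation then gives $l(y_i^j)_{R_j^*} = 1$ for all $i$ and $l(y_i^j y_l^j)_{R_j^*} = l(y_i^j \overline{y}_l^j)_{R_j^*} = 2$ for all $i<l$. Each of these values is the coordinatewise absolute minimum over all roses in $S_{s(j)}$, because a nontrivial conjugacy class has cyclic length at least $1$ in any rose, and a cyclically reduced length-two product of two distinct basis elements has cyclic length at least $2$ in any free basis of $F_{s(j)}$. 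Hence $R_j^* \in \Lambda$ and the lex-min value of $f_j$ is pinned down.

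For uniqueness, I would argue as follows. Suppose $(R,\phi) \in \Lambda$. The constraints $l(y_i^j)_R = 1$ force each $y_i^j$ to be conjugate in $\pi_1(R)$ to one of the petals of $R$, up to inversion; write $y_i^j = g_i\, z_{\sigma(i)}^{\epsilon_i}\, g_i^{-1}$ in the petal basis $\{z_1,\ldots,z_{s(j)}\}$ of $R$. The additional pair constraints $l(y_i^j y_l^j)_R = l(y_i^j \overline{y}_l^j)_R = 2$ then rule out nontrivial conjugators via a Whitehead-style analysis: any nontrivial $g_i$ or $g_l$ would introduce cancellation patterns in the cyclic reduction of $y_i^j y_l^j$ or $y_i^j \overline{y}_l^j$ that push at least one of the two pair lengths strictly above $2$. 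Consequently the petal basis of $R$ must be a signed permutation of $\{y_1^j, \ldots, y_{s(j)}^j\}$, and two such bases determine the same vertex of $S_{s(j)}$, so $(R,\phi) = R_j^*$.

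The main obstacle is this last rigidity step; the inclusion of both the untwisted products $y_i^j y_l^j$ and the twisted products $y_i^j \overline{y}_l^j$ in $W'$ is precisely what forces the conjugators $g_i$ to be trivial. Once $\Lambda = \{R_j^*\}$ is established, we conclude $\mathrm{Minset}(f_j) = \mathrm{st}(R_j^*)$, which is the star of a vertex in a simplicial complex and hence contractible, completing the proof.
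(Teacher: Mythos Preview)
Your strategy is sound but differs from the paper's. The paper argues via the Lipschitz metric on outer space: taking $(R,\phi)$ to be the standard rose (petals $=y_i^j$) and $(R_1,\phi_1)$ any other rose in $\mathrm{Minset}(f_j)$, one considers an optimal map $f:R\to R_1$ in the homotopy class of $\phi_1\circ\phi^{-1}$. Since the Lipschitz constant is realized on an embedded loop or figure-eight in $R$ (Francaviglia--Martino, cited as \cite{FM}), and those are exactly the witnesses $y_i^j$, $y_i^j y_l^j$, $y_i^j\overline y_l^j$, whose lengths agree in the two roses, one gets $\mathrm{Lip}(f)=1$. Then the maximally stretched subgraph $\Gamma_f$ must be all of $R$ (else some petal would have length $<1$ in $R_1$) and there can be no illegal turn (else some figure-eight would have length $<2$), so $f$ is an isometry. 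This packages the word combinatorics you propose into the train-track formalism and avoids any case analysis on conjugators.

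Two points in your write-up need repair. First, the claim that the pair lengths are coordinatewise minimal over \emph{all} roses is false: $y_1^j y_2^j$ is primitive and has cyclic length $1$ in the basis $\{y_1^j y_2^j,\,y_2^j,\,y_3^j,\ldots\}$. What you actually need (and what is true, e.g.\ by abelianization) is that once the earlier lex coordinates force $l(y_i^j)=1$ for every $i$, the pair lengths are then $\geq 2$; this suffices for the lex argument. Second, in your rigidity step the constraints involving a fixed index alone do not determine the conjugators up to inner automorphism: for instance $y_1=z_1$, $y_2=z_1 z_2 z_1^{-1}$, $y_3=z_3$ defines a genuinely different marked rose satisfying all length constraints that involve $y_1$, and it is the condition $l(y_2 y_3)=2$ that eliminates it. So the ``Whitehead-style analysis'' must really use the full set of pairwise constraints simultaneously; this is exactly the bookkeeping the paper's Lipschitz argument sidesteps.
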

\begin{proof}
Suppose that $(R,\phi)$ is a rose as in Figure~\ref{fig:rphi} with
\begin{equation}\label{ipotesi}
l(y_i^j) = 1, \, \forall \, i= 1, \ldots, s(j); \, l(y_i^j
y_l^j) = l(y_i^j \overline{y}_l^j) = 2, \, \forall \, i<l, \, i,l=
1, \ldots, s(j).
\end{equation}

\begin{figure}[htbp]
\begin{center}
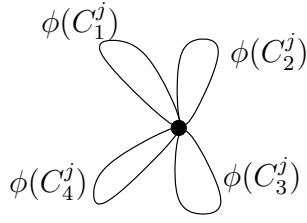

\caption{The rose $(R,\phi)$ with $s(j)=4$.} \label{fig:rphi}
\end{center}
\end{figure}

Let $(R_1,\phi_1)$ be another rose in $\mathrm{Minset}(f_j)$. We will prove
that the optimal homotopy map $f$ such that the following diagram
commutes
$$
\xymatrix{
 R \ar[r]^{f}  & R_1  \\
 R_{s(j)}  \ar[u]^{\phi} \ar[ur]_{\phi_1} &  }
$$
is an isometry up to homotopy. First of all, we need to show that $\Gamma_f = R$.
\newline Notice that by Proposition 3.15 in \cite{FM}, the Lipschitz
constant is determined by a cycle or a figure eight graph. By
(\ref{ipotesi}), $\mbox{Lip}(f)=1$.
\newline By contradiction, if $\Gamma_f$ is not the whole graph,
then there is a loop not in $\Gamma_f$ that has length less
than one by Theorem~\ref{greengraphthm} and that gives a
contradiction with our assumption (\ref{ipotesi}). Hence,
$\Gamma_f = R$.
\newline In order to prove that $f$ is an isometry, we need to show
that we do not have any illegal turn.
\newline If a loop contains an illegal turn,
then its length is stretched by a number $<\mbox{Lip}(f)$ (see \cite{FM}).
Therefore, if we have a loop with an illegal turn,
then the length of that loop would be less than the Lipschitz
constant, $1$, but that is a contradiction.
\newline If $s(j)=2$, then we have other two possibilities for illegal turns (see
Figure~\ref{fig:loop2}). In both cases the length of the figure eight graph would be less than
$2$ and that leads to a contradiction. In general, if we have an illegal
turn in a path contained in a subrose of $m$ petals, then the sum of the
lengths of the edges in the subrose would be less than $m$, and that
contradicts (\ref{ipotesi}).
\newline In conclusion, $f$ is an
isometry and $\mathrm{Minset}(f_j)$ consists of the star of a single rose in
$S_{s(j)}$.

\begin{figure}[htbp]
\begin{center}
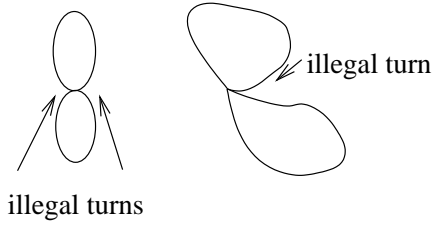

\caption{Two possible illegal turns.} \label{fig:loop2}
\end{center}
\end{figure}

\end{proof}
A different proof of that lemma can be found in \cite{CV}.
\newline Applying Lemma~\ref{starose} to $\mathrm{Minset}(f_j) \hookrightarrow S_n$, for each
$(\Gamma,\psi)$ in $\mathrm{Minset}(f_j)$, the marking $\psi$ is an embedding
on the wedge cycle $\mathbb{B}_j$, and we have the following
corollary.
\begin{corollary} \label{costarose}
$\mathrm{Minset}(f_j) = S_n(A_j)$.
\end{corollary}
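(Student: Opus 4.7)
The plan is to establish both set-theoretic inclusions $\mathrm{Minset}(f_j) \subseteq S_n(A_j)$ and $S_n(A_j) \subseteq \mathrm{Minset}(f_j)$, using Lemma \ref{starose} as the main tool.

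For the first inclusion, I would take $(\Gamma, \psi) \in \mathrm{Minset}(f_j)$, so that $(\Gamma, \psi)$ lies in the open star of some rose $(R, \phi)$ minimizing $f_j$ in lex order. The function $f_j$ depends only on the lengths of conjugacy classes of words lying in the free factor $A_j$. Consequently the analysis of Lemma \ref{starose} applies verbatim to the $s(j)$ loops in $R$ representing $y_1^j, \ldots, y_{s(j)}^j$: any shared edge or illegal turn among these loops would strictly shorten some $f_{w_i^j}$, $f_{w_{i,l}^j}$ or $f_{w_{i,\overline{l}}^j}$, contradicting minimality. Hence $\phi$ embeds $\bigvee_{i=1}^{s(j)} S^1$ into $R$ as a subrose. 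This embedding persists under every blow-up, so $\psi$ embeds $\bigvee_{i=1}^{s(j)} S^1$ into $\Gamma$ as the wedge cycle $\mathbb{B}_j$. Since there is only a single wedge cycle, the dual graph reduces to a single vertex and is automatically a forest, so $(\Gamma, \psi)$ is an $(A_j, n)$-graph.

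For the reverse inclusion, take $(\Gamma, \psi) \in S_n(A_j)$, so $\mathbb{B}_j \subset \Gamma$ is an embedded wedge of $s(j)$ circles. Choose a maximal forest $F$ in $\Gamma$ disjoint from $\mathbb{B}_j$ and collapse it to obtain a rose $(R, \phi)$ in which $\mathbb{B}_j$ survives as an embedded subrose whose petals represent $y_1^j, \ldots, y_{s(j)}^j$. After rescaling so that each such petal has length $1$, the hypothesis (\ref{ipotesi}) of Lemma \ref{starose} is satisfied on the $A_j$-subrose, which places $(R, \phi)$ in the set of minimizing roses for $f_j$. Therefore $(\Gamma, \psi) \in \mathrm{st}((R, \phi)) \subseteq \mathrm{Minset}(f_j)$.

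The main obstacle is justifying that Lemma \ref{starose}'s argument transfers from $S_{s(j)}$ to $S_n$ when the ambient rose carries $n - s(j)$ additional petals lying outside $A_j$. The resolution is that $f_j$ is blind to those extra petals: its values depend only on the geometry of the $A_j$-subrose, so the minimization question is intrinsic to that subrose and the Lipschitz/illegal-turn analysis of the lemma (via Theorem \ref{greengraphthm}) carries over without modification.
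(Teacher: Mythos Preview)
Your approach mirrors the paper's: the paper's entire justification is the sentence immediately preceding the corollary, which asserts that Lemma~\ref{starose} applied inside $S_n$ forces the marking to embed $\mathbb{B}_j$, and your two-inclusion argument is the natural expansion of that sentence. Your reverse inclusion $S_n(A_j)\subseteq\mathrm{Minset}(f_j)$ is fine.

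The forward inclusion, however, has a real gap at the sentence ``This embedding persists under every blow-up.'' It does not. Take $n=3$, $A_j=\langle a,a'\rangle$, and the minimizing rose with petals $a,a',b$. Blow its vertex up into a path $v_1\,e_1\,v_2\,e_2\,v_3$, attaching $a$ and $a'$ as (non-loop) edges from $v_1$ to $v_3$ and $b$ as a loop at $v_2$. The resulting $\Gamma$ has all valences $\ge 3$ and no separating edge, so it lies in $S_3$ and hence in the star of the rose, i.e.\ in $\mathrm{Minset}(f_j)$. But the unique embedded circle in the free homotopy class of $a$ is $a\cup e_1\cup e_2$, and likewise for $a'$; these two circles share the arc $e_1\cup e_2$, so their union is a theta graph and no marking can realize $\mathbb{B}_j$ as an embedded wedge of two circles. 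Thus $(\Gamma,\psi)\notin S_3(A_j)$, and the literal equality $\mathrm{Minset}(f_j)=S_n(A_j)$ fails.

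What is actually true---and sufficient for the contractibility application---is that $S_n(A_j)\hookrightarrow\mathrm{Minset}(f_j)$ admits a deformation retraction, obtained by collapsing in each graph the subtree shared by the cycles $\phi(C_i^j)$, exactly as in Theorem~\ref{equal}. The paper's Lemma~\ref{lemmaimpo} and Theorem~\ref{equal} handle overlaps \emph{between} different $\mathbb{B}_j$'s but gloss over the analogous overlap \emph{within} a single $\mathbb{B}_j$; the repair is the same Poset Lemma collapse.
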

Therefore, by Corollary~\ref{costarose} and Lemma~\ref{starose}, if $k=1$ (i.e., we
have only one wedge cycle), then $S_n(A_1)$ is contractible.
\newline Note that $\mathrm{Minset}(f) \subseteq \mathrm{Minset}(f_j)$, for all $j=1, \ldots,
k$.
\newline Now it remains to prove that $S_n(\mathcal{A})$ is contractible for $k>1$.
We will follow the approach described in \cite{BCV}.
\begin{definition}
A forest $F$ in $(\Gamma,\phi) \in \mathrm{Minset}(f)$ is called
\emph{admissible} if the marked graph $(\Gamma',\phi')$ obtained by
collapsing each tree in $F$ to a point is also in $\mathrm{Minset}(f)$.
\end{definition}
\begin{lemma}\label{lemmaimpo}
Let $(\Gamma,\phi) \in \mathrm{Minset}(f)$ and $\phi(C_i^j)$ be the reduced
path representing $\phi(w_i^j)$, for $1 \leq i \leq s(j)$, $1 \leq j
\leq k$. Then
\begin{itemize}
  \item $\mathbb{B}_j = \bigvee_{i=1}^{s(j)} \phi(C_i^j)$ is a wedge cycle in $\Gamma$
  for all $1 \leq j \leq k$;
  \item $\mathbb{B}_j \cap \mathbb{B}_{j'}$ ($j' \neq j$) is either empty, a point or a
  tree;
  \item $\bigcup (\mathbb{B}_j \cap \mathbb{B}_{j'})$ is a forest in $\Gamma$;
  \item If $F$ is an admissible forest in $\Gamma \setminus \{ \phi(C_1^1), \dots, \phi(C_{s(k)}^
  {k}) \}$, then $F \cup \bigcup (\mathbb{B}_j \cap \mathbb{B}_{j'})$ is an admissible
  forest in $\Gamma$.
\end{itemize}
\end{lemma}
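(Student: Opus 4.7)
The first bullet is immediate from Corollary~\ref{costarose}: since $\mathrm{Minset}(f)\subseteq\mathrm{Minset}(f_j)=S_n(A_j)$ for every $j$, the pair $(\Gamma,\phi)$ is a marked $(A_j,n)$-graph, so $\mathbb{B}_j=\bigvee_{i=1}^{s(j)}\phi(C_i^j)$ is a wedge cycle.

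For the second bullet, the central observation is that any simple cycle contained in the wedge cycle $\mathbb{B}_j$ must coincide with a single petal $\phi(C_i^j)$, because $\mathbb{B}_j$ is by definition an embedded wedge of circles. Hence a cycle inside $\mathbb{B}_j\cap\mathbb{B}_{j'}$ would simultaneously be some $\phi(C_i^j)$ and some $\phi(C_{i'}^{j'})$, forcing $\phi_*(y_i^j)$ to be conjugate (up to inverse) to $\phi_*(y_{i'}^{j'})$. Since $\phi_*$ is an isomorphism and since two distinct conjugates of free factors of $F_n$ intersect trivially (Kurosh), this is impossible, so $\mathbb{B}_j\cap\mathbb{B}_{j'}$ is acyclic and hence a forest. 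The refinement that each non-empty intersection is a single tree (rather than an arbitrary forest) would follow by a minimality argument: two distinct components of $\mathbb{B}_j\cap\mathbb{B}_{j'}$ would yield a pair of paths, one inside $\mathbb{B}_j$ and one inside $\mathbb{B}_{j'}$, joining them, and sliding edge lengths along one of these paths would strictly decrease a lexicographically earlier coordinate of $f$, contradicting $(\Gamma,\phi)\in\mathrm{Minset}(f)$.

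For the third bullet, if a cycle $\gamma$ lay in $\bigcup(\mathbb{B}_j\cap\mathbb{B}_{j'})$, its edges would alternate among distinct pairwise intersections, producing a closed incidence pattern among the wedge cycles $\mathbb{B}_j$. This is again ruled out by minimality: one can collapse a carefully chosen subtree of an intersection along $\gamma$ so as to strictly decrease an earlier coordinate of $f$ without increasing any later coordinate. For the fourth bullet, $F$ has no edges in any $\phi(C_i^j)$ while $\bigcup(\mathbb{B}_j\cap\mathbb{B}_{j'})$ lies inside $\bigcup_j\mathbb{B}_j$, so the two forests are disjoint and their union is a forest in $\Gamma$. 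Admissibility is preserved because collapsing the intersection subtrees only identifies vertices inside the wedge cycles without altering the reduced representatives, nor the lengths, of the words $w_i^j$ and $w_{i,l}^j$, so the collapsed graph still realizes the minimum of $f$.

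The main obstacle is the lexicographic minimality argument used in the second and third bullets: one must make precise how a redundant component of $\mathbb{B}_j\cap\mathbb{B}_{j'}$, or a cycle in the global union, permits a strict local length-redistribution that lowers some earlier coordinate of $f$. Carrying this out requires careful bookkeeping of which words $w_i^j$, $w_{i,l}^j$ or $w_{i,\bar l}^j$ are affected by each proposed modification, and verifying that at least one is strictly shortened under the lexicographic order on $\R^m$.
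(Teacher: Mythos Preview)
Your first bullet matches the paper.

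For the remaining bullets you are working much harder than necessary, and the lexicographic length-redistribution arguments you yourself flag as incomplete are genuine gaps. The paper avoids all of this with one structural observation you have overlooked: by definition $\mathrm{Minset}(f)=\bigcup_{(R,\psi)\in\Lambda}\mathrm{st}((R,\psi))$, so $(\Gamma,\phi)$ lies in the star of some marked rose $(R,\psi)$, i.e.\ $\Gamma$ is obtained from $R$ by blowing up its unique vertex into a tree $T$. In the rose the wedge cycles meet only at that vertex, so after blowing up every intersection $\mathbb{B}_j\cap\mathbb{B}_{j'}$ is contained in $T$. Each $\mathbb{B}_j\cap T$ is a subtree (it is the union of the paths in $T$ traced out by the petals $\phi(C_i^j)$, all through the common wedge point), so each $\mathbb{B}_j\cap\mathbb{B}_{j'}$ is an intersection of two subtrees of a tree, hence connected; and the union of all of them sits inside the single tree $T$, hence is a forest. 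Bullets two and three drop out with no minimality argument whatsoever, and bullet four follows because adjoining a subforest of $T$ to an admissible $F$ and collapsing still lands in $\mathrm{st}((R,\psi))\subseteq\mathrm{Minset}(f)$.

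So your Kurosh argument, while correct, is unnecessary, and the delicate length bookkeeping you could not carry out is simply not needed once one remembers that $(\Gamma,\phi)$ is a blow-up of a rose. There is also an independent gap in your fourth bullet: two edge-disjoint forests can certainly have a union containing a cycle, so ``$F$ and $\bigcup(\mathbb{B}_j\cap\mathbb{B}_{j'})$ are disjoint, hence their union is a forest'' does not stand on its own.
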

\begin{proof}
Let $(R,\psi)$ be any marked rose in $\mathrm{Minset}(f)$ with $(\Gamma,
\phi)$ in its star. By Lemma~\ref{starose}, for each $(\Gamma,
\phi)$ in $\mathrm{Minset}(f)$, the marking $\phi$ is an embedding on
$\bigvee_{i=1}^{s(j)} \phi(C_i^j)$. Thus, $\mathbb{B}_j =
\bigvee_{i=1}^{s(j)} \phi(C_i^j)$ is a wedge cycle in $\Gamma$ for
all $1 \leq j \leq k$.
\newline Because $(\Gamma, \phi)$ is obtained by
blowing up the vertex in $R$ into a tree $T$, the intersection
$\mathbb{B}_j \cap \mathbb{B}_{j'}$ is contained in $T$ and it is
connected (see Figure~\ref{fig:contra1}), so the union of all such
intersections is a forest in $T$. The last statement of the lemma
follows from the previous observation.

\begin{figure}[htbp]
\begin{center}
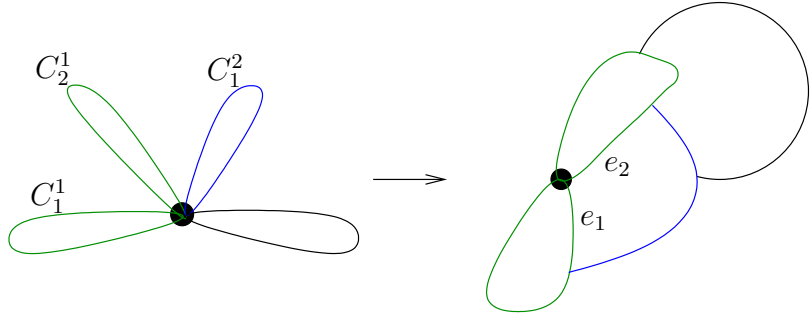

\caption{A point $(\Gamma,\phi)$ in $\mathrm{Minset}(f)$, where $A_1=
<y_1^1,y_2^1>$ and $A_2=<y_1^2>$. The union of the edges $e_1$ and
$e_2$ is the intersection $\mathbb{B}_1 \cap \mathbb{B}_2$.}
\label{fig:contra1}
\end{center}
\end{figure}

\end{proof}
Hence, because $\mathbb{B}_j \cap \mathbb{B}_{j'}$ can be a tree,
$\mathrm{Minset}(f)$ is not contained in $S_n(\mathcal{A})$. However,
we have the following theorem.
\begin{theorem}\label{equal}
$\mathrm{Minset}(f)$ deformation retracts onto $S_n(\mathcal{A})$.
\end{theorem}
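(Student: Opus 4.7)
The plan is to apply the Poset Lemma to a poset map $\rho: \mathrm{Minset}(f) \to \mathrm{Minset}(f)$ whose image equals $S_n(\mathcal{A})$. For $(\Gamma, \phi) \in \mathrm{Minset}(f)$, set
$$
F_\Gamma := \bigcup_{j \neq j'}\bigl(\mathbb{B}_j \cap \mathbb{B}_{j'}\bigr),
$$
which by Lemma~\ref{lemmaimpo} is an admissible forest in $\Gamma$. Define $\rho(\Gamma, \phi) := (\Gamma/F_\Gamma, \phi/F_\Gamma)$. Then by construction $\rho(\Gamma, \phi) \leq (\Gamma, \phi)$ in the poset, and admissibility ensures $\rho(\Gamma, \phi) \in \mathrm{Minset}(f)$.

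Next I would verify that $\rho$ lands in $S_n(\mathcal{A})$. After the collapse, any two distinct wedge cycles meet in at most a single point (the image of the subtree they previously shared). To see that the dual graph of the collapsed wedge cycles is a forest, note that every $(\Gamma,\phi) \in \mathrm{Minset}(f)$ lies in the star of some marked rose $(R,\psi) \in \Lambda$, so $\Gamma$ is obtained from $R$ by blowing up its single vertex into a tree $T \subset \Gamma$. All pairwise intersections $\mathbb{B}_j \cap \mathbb{B}_{j'}$ sit inside $T$, hence $F_\Gamma \subset T$ and the image $T/F_\Gamma$ is still a tree inside $\Gamma/F_\Gamma$. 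Any cycle in the dual graph of the $\mathbb{B}_j$'s would force a closed loop inside $T/F_\Gamma$, which is impossible.

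The remaining checks are routine. For order-preservation, if $(\Gamma_1,\phi_1) \leq (\Gamma_2,\phi_2)$ via a collapse $q:\Gamma_2 \to \Gamma_2/F = \Gamma_1$, then wedge cycles push forward under $q$, so $q(\mathbb{B}_j^{(2)} \cap \mathbb{B}_{j'}^{(2)}) \subseteq \mathbb{B}_j^{(1)} \cap \mathbb{B}_{j'}^{(1)}$ and therefore $q(F_{\Gamma_2}) \subseteq F_{\Gamma_1}$; consequently $\rho(\Gamma_2,\phi_2)$ further collapses onto $\rho(\Gamma_1,\phi_1)$. Moreover $\rho$ fixes every point of $S_n(\mathcal{A})$, since in a marked $(\mathcal{A},n)$-graph each pairwise intersection $\mathbb{B}_j \cap \mathbb{B}_{j'}$ is at most a singleton, so $F_\Gamma$ contains no edges and the collapse is trivial. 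Together with the previous paragraph this gives $\rho(\mathrm{Minset}(f)) = S_n(\mathcal{A})$, and the Poset Lemma yields the desired deformation retraction.

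The main obstacle I anticipate is precisely the dual-graph verification in the second paragraph: a priori, three or more wedge cycles could become glued after the collapse in a pattern creating a cycle in the dual graph (as in Example~\ref{exnograph}). Ruling this out relies crucially on the single-vertex blow-up structure of elements of $\mathrm{Minset}(f)$, which confines all wedge-cycle interactions to a single tree $T$ whose tree structure is preserved by the collapse of $F_\Gamma \subset T$.
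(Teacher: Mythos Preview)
Your proposal is correct and follows essentially the same approach as the paper: define the map that collapses the forest $\bigcup_{j\neq j'}(\mathbb{B}_j\cap\mathbb{B}_{j'})$, check it is a poset map with image $S_n(\mathcal{A})$, and invoke the Poset Lemma. You supply more detail than the paper on why the collapsed graph satisfies the dual-graph condition and on order-preservation, but the argument is the same.
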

\begin{proof}
First of all, notice that we have $S_n(\mathcal{A})
\hookrightarrow \mathrm{Minset}(f)$.
\newline Let $(\Gamma,\phi) \in \mathrm{Minset}(f)$. Collapsing each component of $\bigcup
(\mathbb{B}_j \cap \mathbb{B}_{j'})$ to a point we obtain a map $g$
from $\mathrm{Minset}(f)$ to $S_n(\mathcal{A})$ (see Figure~\ref{fig:contra2}).
\newline If $(\Gamma',\phi') \in \mathrm{Minset}(f)$ is obtained from $(\Gamma,\phi)$
by collapsing a forest $F$, then $F \cup \bigcup (\mathbb{B}_j \cap
\mathbb{B}_{j'})$ is also a forest in $\Gamma$ by Lemma~\ref{lemmaimpo}. Hence, $g$ is a poset map.
\newline By the Poset Lemma, $g$ is a deformation retraction from
$\mathrm{Minset}(f)$ onto $S_n(\mathcal{A})$.

\begin{figure}[htbp]
\begin{center}
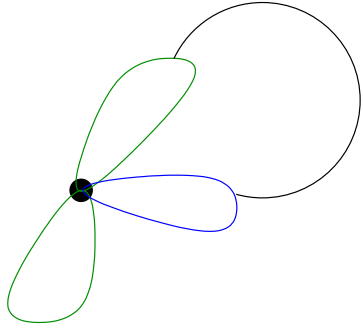

\caption{The image $g((\Gamma,\phi))$, where $(\Gamma,\phi) \in
\mathrm{Minset}(f)$ is the point in Figure~\ref{fig:contra1}, is given by
collapsing $e_1$ and $e_2$.} \label{fig:contra2}
\end{center}
\end{figure}

\end{proof}
We are now able to prove Theorem~\ref{contract}.
\begin{proof}
By Theorem~\ref{equal}, $S_n(\mathcal{A})$ is a deformation
retraction of $\mathrm{Minset}(f)$. Because $\mathrm{Minset}(f)$ is contractible by
Theorem~\ref{contra}, $S_n(\mathcal{A})$ is contractible.
\newline We conclude that $\mathrm{CV}_n(\mathcal{A})$ is contractible
because $\mathrm{CV}_n(\mathcal{A})$ deformation retracts onto $S_n(\mathcal{A})$.
\end{proof}

\section{Relative Spine vs. Small Spine}

We introduce a new spine, called small spine, that is a simplicial
complex smaller than the relative spine, but which carries all the
data coming from the relative outer space.
\newline Consider the relative outer space $\mathrm{CV}_n(\mathcal{A})$.
\begin{definition} \label{def:smallspine}
Let $D_n(\mathcal{A})$ be the subcomplex of $S_n(\mathcal{A})$
spanned by vertices $(\Gamma,\phi)$ in which all the wedge
cycles are disjoint. $D_n(\mathcal{A})$ is called \emph{small
spine}.
\end{definition}
Thus the small spine $D_n(\mathcal{A})$ is a simplicial complex.
The definition of small spine shall be more clear after few
examples.
\begin{example}
Suppose $s(i)>1$ for $i=1, \ldots, k$. Let $(\Gamma, \phi)$ be a
maximal graph in the centroid of a maximal dimensional open
polysimplex in the relative outer space $\mathrm{CV}_n(\mathcal{A})$ with
vertices on the wedge cycles. So the basepoints of the wedge cycles
have valence $2s(1), \ldots, 2s(k)$ and the other vertices have
valence $3$. The maximal simplex of the small spine that contains
$\Gamma$ is given by the barycentric subdivision of the
polysimplex obtained by varying the length of the edges in
$\phi(C_1^1), \ldots, \phi(C_{s(k)}^{k})$ and leaving the length of
the edges in $\Gamma \setminus \{ \phi(C_1^1), \ldots,
\phi(C_{s(k)}^{k}) \}$ equal to $\frac{1}{N}$, where $N$ is the
number of edges in $\Gamma \setminus \{ \phi(C_1^1), \ldots,
\phi(C_{s(k)}^{k}) \}$.
\end{example}
\begin{example}\label{ex2}
Consider the group $\mathrm{Out}(F_4;A_1,A_2)$, where $F_4=<a,a',b,b'>$ and
$A_1=<a,a'>$, $A_2=<b,b'>$. Notice that, using Stalling's method
(see \cite{S}), an element in $\mathrm{Out}(F_4;A_1,A_2)$ is of the form:
$$
\left.
  \begin{array}{ccl}
    a & \mapsto & \omega(a,a') \, a^{\varepsilon_1} \, \overline{\omega}(a,a') \\
    a' & \mapsto & \omega(a,a') \, a'^{\varepsilon_2} \, \overline{\omega}(a,a') \\
    b & \mapsto & \omega(b,b') \, b^{\varepsilon_3} \, \overline{\omega}(b,b') \\
    b' & \mapsto & \omega(b,b') \, b'^{\varepsilon_4} \, \overline{\omega}(b,b') \\
  \end{array}
\right.
$$
where $\varepsilon_i \in \{ \pm 1 \}$, $1 \leq i \leq 4$, $\omega(a,a') \in A_1$ and
$\omega(b,b') \in A_2$.
\newline Modulo separating edges, a point in $\mathrm{CV}_4(A_1,A_2)$ is given by two wedge cycles,
with two cycles each, attached in one point (see Figure~\ref{fig:spineex2}).
\newline The relative spine and the small spine $D_4(A_1,A_2)$
are both equal to the product of two trees
$T_1$ and $T_2$ that correspond to the universal coverings of the
wedge cycles associated to $A_1$ and $A_2$.
\end{example}

\begin{figure}[htbp]
\begin{center}
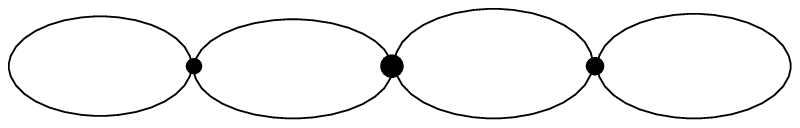
\caption{A point $(\Gamma,\phi)$ in $\mathrm{CV}_4(A_1,A_2)$.}
\label{fig:spineex2}
\end{center}
\end{figure}

\begin{example}\label{ex3}
Consider the group $\mathrm{Out}(F_5;A_1,A_2)$, where $F_5=<a,a',b,b',c>$ and
$A_1=<a,a'>$, $A_2=<b,b'>$ (see Example~\ref{expoly}).
\newline By Example~\ref{ex2} and Stalling's method, an element in $\mathrm{Out}(F_5;A_1,A_2)$ is
of the form:
$$
\left.
  \begin{array}{ccl}
    a & \mapsto & \omega(a,a') \, a^{\varepsilon_1} \, \overline{\omega}(a,a') \\
    a' & \mapsto & \omega(a,a') \, a'^{\varepsilon_2} \, \overline{\omega}(a,a') \\
    b & \mapsto & \omega(b,b') \, b^{\varepsilon_3} \, \overline{\omega}(b,b') \\
    b' & \mapsto & \omega(b,b') \, b'^{\varepsilon_4} \, \overline{\omega}(b,b') \\
    c & \mapsto & u_1(a,a',b,b') c^{\varepsilon_5} u_2(a,a',b,b')
  \end{array}
\right.
$$
where $\varepsilon_i \in \{ \pm 1 \}$, $1 \leq i \leq 5$, $\omega(a,a') \in A_1$,
$\omega(b,b') \in A_2$ and $u_i(a,a',b,b')$'s are elements in $F_4 =
<a,a',b,b'>$.
\newline The relative outer space, the relative spine and the small
spine are more complicated than in Example~\ref{ex2}, but let us
understand what is happening in this case.
\newline Consider the point $(\Gamma,\phi)$ in $\mathrm{CV}_5(A_1,A_2)$
described in Figure~\ref{fig:spineex3}. Varying the length of the
edges in $\Gamma$ we can move in an open (maximal) $5$-polysimplex
of $\mathrm{CV}_5(A_1,A_2)$. When we shrink an edge $e$ that is not in a
wedge cycle, we end up in an open $4$-polysimplex. Because the only
way to move away from that open $4$-polysimplex is to blow up the
vertex given by collapsing $e$, that open $4$-polysimplex is a free
face (i.e., it is a face of a unique polysimplex) in the relative
outer space.
\newline Hence, it is possible to deformation retract the free face
onto the interior of the (maximal) polysimplex.
\newline First collapsing $e$ and then four edges in the wedge
cycles we get a $5$-simplex $\sigma$ in the relative spine. By
Definition~\ref{def:smallspine} of small spine, the edge $e$ cannot
be collapsed and so $\sigma$ is not in the small spine.
\newline Repeating the same argument for the graph in Figure~\ref{fig:expoly1}
of Example~\ref{expoly}, we can compare the simplices in
$S_5(A_1,A_2)$ (see Figure~\ref{fig:barycentric}) with the simplices
in $D_5(A_1,A_2)$ in Figure~\ref{fig:smallspine1}.

\begin{figure}[htbp]
\begin{center}
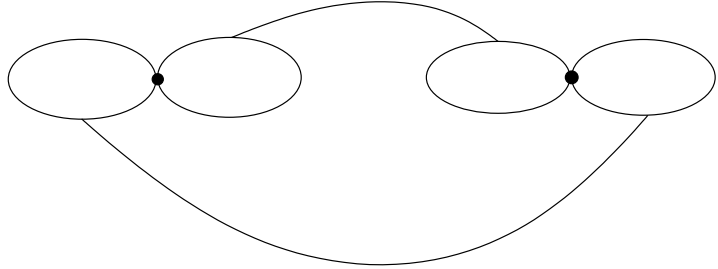

\caption{A point in $S_5(A_1,A_2)$.} \label{fig:spineex3}
\end{center}
\end{figure}

\begin{figure}[htbp]
\begin{center}
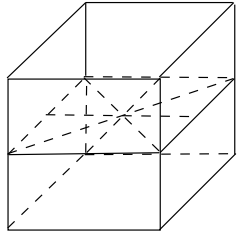

\caption{The simplices in $D_5(A_1,A_2)$ in this picture are given
by the barycentric subdivision of the horizontal square in the middle
of the cube.}
\label{fig:smallspine1}
\end{center}
\end{figure}

Therefore, the small spine is strictly smaller than the relative
spine, but what are missing are vertices in the relative outer space
corresponding to free faces.
\end{example}
Following \cite{BCV} we will prove that there is a deformation
retraction from the relative spine to the small spine.
\begin{theorem}
There is an $\mathrm{Out}(F_n; \mathcal{A})$-equivariant deformation
retraction of $S_n(\mathcal{A})$ onto $D_n(\mathcal{A})$.
\end{theorem}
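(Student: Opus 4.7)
The plan is to follow the strategy of \cite{BCV} and apply the Poset Lemma. I will construct a canonical $\mathrm{Out}(F_n;\mathcal{A})$-equivariant poset map $\rho\colon S_n(\mathcal{A})\to S_n(\mathcal{A})$ (on vertices, extended simplicially via the order complex structure) satisfying $\rho(x)\ge x$ for every vertex $x$, with image contained in $D_n(\mathcal{A})$ and $\rho|_{D_n(\mathcal{A})}=\id$. The Poset Lemma will then produce the desired deformation retraction, and equivariance will follow automatically from the canonical nature of the construction.

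The map $\rho$ is defined by blowing up all ``bad'' meeting points of wedge cycles. Given a vertex $(\Gamma,\phi)\in S_n(\mathcal{A})$, let $I$ be the finite set of vertices of $\Gamma$ which lie in two or more wedge cycles. At each $v\in I$, I replace $v$ by a central vertex $v^*$ together with, for every wedge cycle $\mathbb{B}_j$ meeting $v$, a new vertex $v_j$ attached to $v^*$ by a new edge $\gamma_j$; every half-edge of $\mathbb{B}_j$ originally incident to $v$ is moved to $v_j$, while half-edges not in any wedge cycle remain at $v^*$ (with $v^*$ suppressed if it becomes bivalent, and the natural modification when $v$ was the basepoint of some $\mathbb{B}_j$). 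The marking is transported in the obvious way. Because the dual graph of the $\mathbb{B}_j$'s is a forest, this local blow-up yields a valid $(\mathcal{A},n)$-graph whose wedge cycles are pairwise disjoint, hence represents a vertex of $D_n(\mathcal{A})$; collapsing the new edges $\gamma_j$ recovers $(\Gamma,\phi)$, so $\rho((\Gamma,\phi))\ge (\Gamma,\phi)$ in the poset. For $(\Gamma,\phi)\in D_n(\mathcal{A})$, $I$ is empty and $\rho$ acts as the identity.

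The crux is verifying that $\rho$ preserves the order: if $(\Gamma_1,\phi_1)\le(\Gamma_2,\phi_2)$ with $\Gamma_1=\Gamma_2/F$ for a forest $F$, I must produce a forest $F''$ in $\rho((\Gamma_2,\phi_2))$ whose collapse gives $\rho((\Gamma_1,\phi_1))$. Collapsing a forest can only create, never remove, intersection vertices of wedge cycles, so each new element of $I_1\setminus I_2$ comes from a subtree of $F$ that collapses to a single point. I take $F''$ to be $F$ with one edge omitted from each such subtree; that omitted edge then plays exactly the role of a blow-up edge $\gamma_j$ in $\rho((\Gamma_1,\phi_1))$, while the pre-existing blow-ups of $I_2$ descend unchanged. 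The main obstacle is the combinatorial bookkeeping inside this argument: one must carefully match the blow-up edges on the two sides, treat the degenerate cases where several wedge cycles share a common vertex or where no non-wedge-cycle edge is present at an intersection, and check that the resulting quotient is actually equivalent to $\rho((\Gamma_1,\phi_1))$ as a marked graph. Once this order-preservation is established, the Poset Lemma immediately gives the $\mathrm{Out}(F_n;\mathcal{A})$-equivariant deformation retraction of $S_n(\mathcal{A})$ onto $D_n(\mathcal{A})$.
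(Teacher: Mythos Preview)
Your approach differs from the paper's and, as written, has a genuine gap: the map $\rho$ is \emph{not} a poset map. Here is a concrete counterexample. Take $k=3$ with $s(1)=s(2)=s(3)=1$ and $n=3$. Let $\Gamma_2$ consist of a circle $\mathbb{B}_2$ with two vertices $u,w$ joined by two edges, a loop $\mathbb{B}_1$ at $u$, and a loop $\mathbb{B}_3$ at $w$; thus $u\in\mathbb{B}_1\cap\mathbb{B}_2$ and $w\in\mathbb{B}_2\cap\mathbb{B}_3$. Let $e\subset\mathbb{B}_2$ be one of the two edges, and set $\Gamma_1=\Gamma_2/e$, a rose whose single vertex $v$ lies in all three wedge cycles. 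Your $\rho(\Gamma_2)$ blows up $u$ and $w$ separately (each $v^*$ being bivalent and suppressed), yielding a graph whose ``new'' part is the path $u_1\!-\!u_2\!-\!w_2\!-\!w_3$, while $\rho(\Gamma_1)$ blows $v$ into a \emph{star} $v^*$ with three leaves $v_1,v_2,v_3$. Both $\rho(\Gamma_1)$ and $\rho(\Gamma_2)$ have exactly four vertices, so the only admissible forest collapse between them is the trivial one; but they are visibly non-isomorphic as marked $(\mathcal{A},n)$-graphs (in $\rho(\Gamma_2)$ the wedge cycle $\mathbb{B}_2$ still has two edges, in $\rho(\Gamma_1)$ it is a single loop). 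Hence $\rho(\Gamma_1)\not\le\rho(\Gamma_2)$ even though $\Gamma_1\le\Gamma_2$. The difficulty is intrinsic: when three or more wedge cycles meet there is no canonical tree to blow up to, so no single equivariant ``blow-up'' map can be order-preserving.

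The paper avoids this by taking the opposite route. Rather than pushing up via a Poset Lemma map, it builds $S_n(\mathcal{A})$ from $D_n(\mathcal{A})$ by attaching the missing vertices $(\Gamma,\phi)$ in order of decreasing number of vertices of $\Gamma$, so that each is attached along its entire upper link; it then suffices to show that for any vertex $v$ lying in at least two wedge cycles the complex $L(v)$ of legal ideal edges at $v$ is contractible. This is exactly the argument of \cite{BCV}, Proposition~17. If you want to keep a Poset Lemma flavour, you would need to replace your single $\rho$ by an inductive sequence of maps, each separating off just one wedge cycle at a time, and argue contractibility of the relevant link at each step --- which amounts to the paper's proof.
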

\begin{proof}
By the definition of $D_n(\mathcal{A})$, we can build $S_n(\mathcal{A})$
from the small spine $D_n(\mathcal{A})$ by adding marked
$(\mathcal{A},n)$-graphs $(\Gamma, \phi)$ in order of decreasing
number of vertices in $\Gamma$. Thus at each stage, we are attaching
$(\Gamma, \phi)$ along its entire upper link in $S_n(\mathcal{A})$.
Hence, it is suffices to show that the upper link is
contractible.
\newline Note that a marked $(\mathcal{A},n)$-graph in $S_n(\mathcal{A})$
with $k$ vertices (the basepoints of the wedge cycles)
of valence $2s(1), \ldots, 2s(k)$ and the remaining vertices of
valence $3$ is in $D_n(\mathcal{A})$.
\newline Let $(\Gamma,\phi) \in S_n(\mathcal{A}) \setminus D_n(\mathcal{A})$.
Then $\Gamma$ contains at least one vertex that is in at
least two wedge cycles. Let $v$ be one of those vertices. In order
to prove that the upper link of $(\Gamma, \phi)$ in $S_n(\mathcal{A})$
is contractible, it is suffices to prove the following
lemma.
\begin{lemma}
If $v$ is contained in at least two wedge cycles, then $L(v)$ is
contractible.
\end{lemma}
That lemma can be proved as the Claim in the
proof of Proposition 17 in \cite{BCV}. We will briefly sketch the
argument of the proof.\\

The set of half edges $E_v$ at $v$ is the union of half edges $A=
\{ a_1, \overline{a}_1, \ldots,$ $ a_r, \overline{a}_r \}$ contained in
some wedge cycle $\mathbb{B}_i$ and $B = \{ b_1, \ldots, b_s \}$ not
contained in any wedge cycle. Fix an element $a \in A$ and define
the \emph{inside} of an ideal edge to be the side containing $a$,
and the \emph{size} to be the number of half edges on the inside.
By hypothesis, $r \geq 2$. The lemma is proved by induction
on $s$. If $s=0$, consider the ideal edge $\alpha$ that
separates $a$ and $\overline{a}$ from all the other half edges. Let
$\mathrm{st}(\alpha)$ denote the star of $\alpha$ in $L(v)$. By adding
vertices of $L(v) \setminus \mathrm{st}(\alpha)$ in order of increasing size,
$L(v)$ deformation retracts onto $\mathrm{st}(\alpha)$. Therefore, $L(v)$ is contractible.
The inductive step can be proved in a similar way.\\

That concludes the proof of the theorem.
\end{proof}
See Figure~\ref{fig:contra3} for an example of the deformation
retraction of $S_n(\mathcal{A})$ onto $D_n(\mathcal{A})$.

\begin{figure}[htbp]
\begin{center}
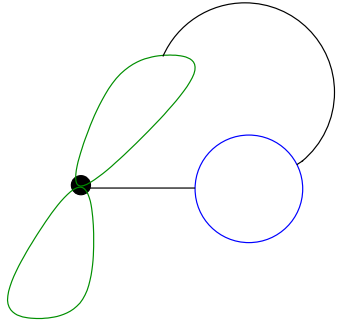

\caption{The deformation retraction of $S_4(A_1, A_2)$ onto
$D_4(A_1,A_2)$ for the graph $(\Gamma,\phi)$ described in Figure~\ref{fig:contra2}.} \label{fig:contra3}
\end{center}
\end{figure}

\begin{corollary}
The small spine $D_n(\mathcal{A})$ is contractible.
\end{corollary}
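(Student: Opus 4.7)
The plan is to derive this immediately from the two contractibility/retraction results already established in the excerpt, with no new construction needed. The deduction is a one-liner, so my write-up will spell out the chain of implications and note why no additional work is required.

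First, I would recall that Theorem~\ref{contract} asserts that $\mathrm{CV}_n(\mathcal{A})$ is contractible, and that the argument in Section~2 exhibits an explicit deformation retraction from $\mathrm{CV}_n(\mathcal{A})$ onto the relative spine $S_n(\mathcal{A})$ (obtained by collapsing barycentric simplices to their faces in $S_n(\mathcal{A})$). Since contractibility is a homotopy invariant and is preserved under deformation retraction, $S_n(\mathcal{A})$ itself is contractible.

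Next, I would apply the theorem just proved, namely that there is an $\mathrm{Out}(F_n;\mathcal{A})$-equivariant deformation retraction of $S_n(\mathcal{A})$ onto the small spine $D_n(\mathcal{A})$. Composing (or rather, applying) this deformation retraction, $D_n(\mathcal{A})$ is a deformation retract of a contractible space, hence itself contractible.

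There is no real obstacle here; the corollary is a formal consequence of the two results above. If anything, the only thing worth remarking on is that the equivariance of the retraction is not needed for the contractibility conclusion (it will matter later when one uses $D_n(\mathcal{A})$ to compute the virtual cohomological dimension). The short proof would read:

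\begin{proof}
By Theorem~\ref{contract}, $\mathrm{CV}_n(\mathcal{A})$ is contractible, and since it deformation retracts onto $S_n(\mathcal{A})$, the relative spine $S_n(\mathcal{A})$ is contractible as well. By the previous theorem, $S_n(\mathcal{A})$ deformation retracts onto $D_n(\mathcal{A})$. Therefore $D_n(\mathcal{A})$ is contractible.
\end{proof}
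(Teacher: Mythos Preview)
Your proposal is correct and matches the paper's intended argument: the corollary is stated without proof in the paper precisely because it follows immediately from the contractibility of $S_n(\mathcal{A})$ (established in the proof of Theorem~\ref{contract}) together with the preceding theorem giving a deformation retraction of $S_n(\mathcal{A})$ onto $D_n(\mathcal{A})$. The only cosmetic difference is that the paper's proof of Theorem~\ref{contract} actually shows $S_n(\mathcal{A})$ contractible first and deduces contractibility of $\mathrm{CV}_n(\mathcal{A})$ from that, rather than the other way around, but this does not affect your argument.
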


\section{Virtual Cohomological Dimension of \boldmath $\mathrm{Out}(F_n;\mathcal{A})$}

In this section we obtain a corollary of the fact that the relative
outer space is contractible computing the virtual cohomological
dimension of $\mathrm{Out}(F_n;\mathcal{A})$.
\begin{theorem}
We have
$$
\mathrm{vcd}(\mathrm{Out}(F_n;A_1, \ldots, A_k)) = 2n - 2s(1)- \cdots - 2 s(k) +2k
-2-m,
$$
where $s(i_1) = \cdots = s(i_m) = 1$ and $s(j)>1$ for  $j \neq i_1,
\ldots, i_m$.
\end{theorem}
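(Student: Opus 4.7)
The plan is to use the contractible small spine $D_n(\mathcal{A})$ (shown contractible in the previous section) as the combinatorial model for computing the vcd, following the Culler--Vogtmann/\cite{BCV} pattern. The upper bound will come from $\dim D_n(\mathcal{A})$ and the lower bound from an explicit free-abelian subgroup of $\mathrm{Out}(F_n;\mathcal{A})$.

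For the upper bound: by the Minkowski--Serre congruence-subgroup trick (intersecting $\mathrm{Out}(F_n;\mathcal{A})$ with the kernel of $\mathrm{Out}(F_n) \to \mathrm{GL}_n(\mathbb{Z}/3)$) one obtains a torsion-free subgroup $H$ of finite index. Since all stabilizers of the action on $D_n(\mathcal{A})$ are finite, $H$ acts freely and cocompactly, and the quotient is a finite $K(H,1)$, giving $\mathrm{vcd}(\mathrm{Out}(F_n;\mathcal{A})) = \mathrm{cd}(H) \leq \dim D_n(\mathcal{A})$.

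The central calculation is $\dim D_n(\mathcal{A}) = E_{\max} - E_{\min}$, where $E_{\max}$ and $E_{\min}$ are the maximum and minimum number of edges in a marked $(\mathcal{A},n)$-graph belonging to $D_n(\mathcal{A})$; this follows because each simplex of $D_n(\mathcal{A})$ is a chain in the poset of forest-collapses and each elementary edge collapse drops the edge count by one. For $E_{\max}$ I would take a graph in which each basepoint $p_j$ of $\mathbb{B}_j$ (for $s(j)\geq 2$) has valence exactly $2s(j)$ and all other vertices are trivalent; combining $V - E = 1 - n$ with the valence-sum identity $\sum_v \mathrm{val}(v) = 2E$, and carefully accounting for the $m$ wedge cycles with $s(j)=1$ (which contribute no high-valence basepoint), one arrives at $E_{\max} = 3n + 3k - 2\sum_j s(j) - m - 3$. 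For $E_{\min}$ I would take each $\mathbb{B}_j$ in reduced form (a rose of $s(j)$ loops or a single loop), joined by a tree of $k-1$ separating edges, with the remaining $n - \sum_j s(j)$ free-generator loops all wedged at one chosen vertex; this yields $n + k - 1$ edges, and is minimal in $D_n(\mathcal{A})$ because every wedge-cycle edge is a loop in $\Gamma$ while each tree edge, if collapsed, would force two wedge cycles to share a vertex and thus leave $D_n(\mathcal{A})$. Subtraction gives $\dim D_n(\mathcal{A}) = 2n - 2\sum_j s(j) + 2k - 2 - m$.

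For the matching lower bound, the plan is to exhibit an explicit free-abelian subgroup $\mathbb{Z}^d \leq \mathrm{Out}(F_n;\mathcal{A})$ with $d = 2n - 2\sum_j s(j) + 2k - 2 - m$, generated by partial conjugations and Nielsen transvections indexed by the edges of a fixed maximal $(\mathcal{A},n)$-graph, with supports organized along the tree structure of wedge cycles so that the generators commute pairwise. This generalizes the abelian subgroups used in \cite{BCV} (covering the case $m = k$) and in Collins \cite{C} (covering the case $k = n$, $s \equiv 1$). The main obstacle will be verifying that the selected automorphisms remain independent after passing to $\mathrm{Out}(F_n;\mathcal{A})$: quotienting by $\mathrm{Inn}(F_n)$ can absorb some of the apparent generators, and one must check that the $A_i$-preserving conjugation constraints do not introduce further hidden relations that would cut the rank below $d$.
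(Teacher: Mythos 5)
Your proposal follows essentially the same route as the paper: bound $\mathrm{vcd}$ above by the dimension of the contractible small spine $D_n(\mathcal{A})$ (on which a torsion-free finite-index subgroup acts freely) and below by the rank of an explicit free abelian subgroup of $\mathrm{Out}(F_n;\mathcal{A})$. Your computation of $\dim D_n(\mathcal{A})$ as $E_{\max}-E_{\min}$ is equivalent to the paper's computation of $V_{\max}-k$ (since each elementary collapse drops both $V$ and $E$ by one and the minimal graph has $k$ vertices, as the wedge cycles must stay disjoint), and the resulting value $2n-2\sum s(j)+2k-2-m$ agrees.

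The one place where your proposal stops short of the paper is exactly where you flag the difficulty: the lower bound. The paper writes out an explicit abelian subgroup $A \le \mathrm{Aut}(F_n)$ generated by transvections $\alpha_i\colon x_i\mapsto y_1^1x_i$, $\beta_i\colon x_i\mapsto x_i\,\overline{y}_1^1$ (contributing $2(n-\sum s(i))$ generators), partial conjugations $\gamma_j\colon \theta^j\mapsto y_1^1\theta^j\overline{y}_1^1$ for $1<j\le k$ (contributing $k-1$), and $\delta_r\colon \theta^r\mapsto y_1^r\theta^r\overline{y}_1^r$ for $1<r\le k-m$ (contributing $k-m-1$), and then carries out the independence argument modulo $\mathrm{Inn}(F_n)$ inductively, using evaluations on $x_i$ and on $y_p^1$ to rule out hidden relations. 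That verification of independence is precisely the ``main obstacle'' you identify but do not resolve; your proposal is correct in approach but would need this step filled in to be complete, and it is the most delicate part of the proof (note in particular the ranges $1<j\le k$ and $1<r\le k-m$, not starting at $1$, which is where the $-2-m$ in the final formula comes from).
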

\begin{proof}
Suppose $s(i_1) = \cdots = s(i_m) = 1$ and $s(j)>1$ for  $j \neq
i_1, \ldots, i_m$.
\newline Recall that we consider $F_n = <y_1^1, \ldots, y_{s(k)}^{k}, x_1,
\dots, x_{n-\sum_{i=1}^{k} s(i)}>$.
\newline We denote $\theta^i = (y_{1}^{i}, \ldots, y_{s(i)}^{i})$.
Reordering the $y$'s if necessary, we can suppose $i_1 =k-m
+1, \ldots, i_m=k$. Consider the quotient map $\mathrm{Aut}(F_n) \rightarrow \mathrm{Out}(F_n)$.
\newline In order to compute the lower bound, we notice that the image
of the Abelian subgroup of $\mathrm{Aut}(F_n)$
$$
A = <\alpha_i, \beta_i, \gamma_j, \delta_r \, | \, 1 \leq i \leq
n-\sum_{i=1}^{k} s(i), 1 < j \leq k, 1 < r \leq k-m
>,
$$
where $\alpha_i$ fixes all the elements of the basis except $x_i
\mapsto y_{1}^{1} x_i$, $\beta_i$ fixes all the elements of the
basis except $x_i \mapsto x_i \bar{y}_{1}^{1}$, $\gamma_j$ fixes all
the elements of the basis except $\theta^j \mapsto y_{1}^{1}
\theta^j \bar{y}_{1}^{1}$ and $\delta_r$ fixes all the elements of
the basis except $\theta^r \mapsto y_{1}^{r} \theta^r
\bar{y}_{1}^{r}$, is in $\mathrm{Out}(F_n;\mathcal{A})$.
\newline Indeed, obviously $\{ \alpha_i \}_{i=1, \ldots, n-\sum_{i=1}^{k}
s(i)}$ and $\{ \beta_i \}_{i=1, \ldots, n-\sum_{i=1}^{k} s(i)}$
commute with all the generators of the subgroup. Because we have
$$
\gamma_i \circ \delta_i(y_j^i)= \gamma_i(y_1^i y_j^i \bar{y_1^i}) =
y_1^1y_1^i \bar{y_1^1} y_1^1 y_j^i \bar{y_1^1} y_1^1 \bar{y_1^i}
\bar{y_1^1} =
$$
$$
= y_1^1 y_1^i  y_j^i  \bar{y_1^i} \bar{y_1^1} = \delta_i(y_1^1 y_j^i
\bar{y_1^1}) = \delta_i \circ \gamma_i (y_j^i),
$$
$\gamma_i$ commutes with $\delta_j$ for all $i$ and $j$. Therefore,
$A$ is Abelian.
\newline It remains to check that all the basis elements are
independent.
Notice that $\{ \alpha_i, \beta_j \}_{i,j=1, \ldots, n-\sum_{i=1}^{k} s(i)}$ are independent (the proof
is analog to the one for $\mathrm{Out}(F_n)$, see \cite{CV}) and that
$\bar{y_1^1} \gamma_j y_1^1$ is the conjugation by $\bar{y_1^1}$ of all the
elements except $\theta^j$. Moreover, any compositions of conjugates
of elements in the free Abelian subgroup $D$ generated by $\alpha_i$
and $\beta_i$ for $1 \leq i \leq n-\sum_{i=1}^{k} s(i)$ cannot be equal to
$\gamma_j$ or $\delta_r$, for all $1 < j \leq k$, $1 < r \leq k-m$.
Indeed, for all $1 < j \leq k$, $1 < r \leq k-m$ we have
\begin{enumerate}
\item $\gamma_j(x_i) = x_i$, $\delta_r(x_i) = x_i$, $\forall \, i \in \{ 1, \ldots, n-\sum_{i=1}^{k} s(i) \}$;
\item $\gamma_j(y_p^1) = y_p^1$, $\delta_r(y_p^1) = y_p^1$, $\forall \, p \in \{ 1, \ldots, s(1) \}$,
\end{enumerate}
and a composition of conjugates of elements in $D$ contradicts $(1)$ or
$(2)$. For example, let $F$ be the conjugate by $y_1^1$ of
$$
\alpha_{1}^{-1} \circ \cdots \circ \alpha_{n-\sum_{i=1}^{k} s(i)}^{-1} \circ \beta_{1}^{-1}
\circ \cdots \circ \beta_{n-\sum_{i=1}^{k} s(i)}^{-1}.
$$
We have $F(x_i) = x_i$, but $F(y_p^1) = y_1^1 y_p^1 \overline{y}_1^1$
for $p \in \{ 1, \ldots, s(1) \}$.

Now we proceed by induction. We start considering the subgroup $D_1$
generated by $\gamma_2$, $\alpha_i$, $\beta_i$ for $1 \leq i \leq n-\sum_{i=1}^{k} s(i)$.
An argument similar to the previous one shows that a
composition of conjugates of elements in $D_1$ cannot be equal to $\gamma_3$.
Hence, $\{ \alpha_i, \beta_l, \gamma_2, \gamma_3 \}$ are independent.
By induction, $\{ \alpha_i, \beta_l, \gamma_j \}$ are independent for
$1 \leq i,l \leq n-\sum_{i=1}^{k} s(i)$, $1 < j \leq k$.

Again using a similar argument, it is easy to prove (by induction) that
$\{ \alpha_i, \beta_l, \gamma_j, \delta_r \}$ are independent for
$1 \leq i,l \leq n-\sum_{i=1}^{k} s(i)$, $1 < j \leq k$, $1 < r \leq k-m$.
\newline Hence, there is an Abelian free group of rank $2n-2 \sum_{i=1}^{k} s(i)+2k-2-m$
contained in our group.
\newline For an upper bound, we compute the dimension of the
small spine. Suppose that the wedge cycles lie in a maximally blown up graph
in the small spine and that the graph has $V$ vertices and $E$
edges. The vertices corresponding to the basepoints of the wedge
cycles have valence $2s(1), \ldots, 2s(k-m)$ and the remaining
vertices have valence $3$.
\newline Therefore,
$$
E  =  \frac{3(V-k+m)}{2} + s(1) +  \cdots + s(k-m) =
$$
$$
= \frac{3V-3k+ 3m+ 2 s(1) +  \cdots + 2 s(k-m)}{2}.
$$
Because $V-E=1-n$, we get
$$
V = 2n + 3k -2s(1)- \cdots -2 s(k-m) - 3m -2.
$$
Because the wedge cycles must stay disjoint, we can collapse $V$ vertices to
$k$ vertices (which is the number of wedge cycles). Then,
$$
\mathrm{dim}(D_n(\mathcal{A})) = 2n + 2k -2s(1)- \cdots -2 s(k-m) - 3m
-2.
$$
Because $s(k-m+1) = \cdots = s(k) =1$,
$$
\left.
  \begin{array}{lll}
    \mathrm{vcd}(\mathrm{Out}(F_n;\mathcal{A})) & \leq & 2n + 2k -2s(1)- \cdots -2 s(k-m)
- 3m -2= \\
     & = & 2n - 2 \sum_{i=1}^{k} s(i) +2k -2-m. \\
  \end{array}
\right.
$$
\newline The result follows from Theorem~\ref{contract}.
\end{proof}
\begin{corollary}
If $m=0$, then
$$
\mathrm{vcd}(\mathrm{Out}(F_n;A_1, \ldots, A_k)) = 2n - 2s(1)- \cdots - 2 s(k) +2k -2.
$$
\end{corollary}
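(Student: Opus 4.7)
The plan is to derive the corollary as an immediate specialization of the main theorem (the computation of $\mathrm{vcd}(\mathrm{Out}(F_n;A_1,\ldots,A_k))$ proved just above). The main theorem gives the formula
$$
\mathrm{vcd}(\mathrm{Out}(F_n;A_1,\ldots,A_k)) = 2n - 2s(1) - \cdots - 2s(k) + 2k - 2 - m,
$$
where $m$ is the number of indices $i$ with $s(i)=1$. The hypothesis $m=0$ of the corollary states precisely that no free factor $A_i$ has rank one, so the correction term $-m$ disappears. Thus I would simply invoke the theorem, observe that the hypothesis forces $m=0$, and substitute.

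Concretely, I would write: by hypothesis, $s(i)\geq 2$ for every $i\in\{1,\ldots,k\}$, so in the notation of the preceding theorem there are no indices $i_1,\ldots,i_m$ with $s(i_j)=1$, i.e.\ $m=0$. Plugging $m=0$ into the formula
$$
\mathrm{vcd}(\mathrm{Out}(F_n;A_1,\ldots,A_k)) = 2n - 2s(1) - \cdots - 2s(k) + 2k - 2 - m
$$
yields
$$
\mathrm{vcd}(\mathrm{Out}(F_n;A_1,\ldots,A_k)) = 2n - 2s(1) - \cdots - 2s(k) + 2k - 2,
$$
which is the claimed identity.

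There is no real obstacle here: the corollary is a direct restatement of the $m=0$ case of the main theorem, and so the proof is essentially a one-line substitution. The only thing worth double-checking is the bookkeeping convention for $m$ (that $m=0$ is indeed allowed and means the set of rank-one factors is empty), which is evident from the way the theorem is stated. Hence no new argument, no new construction of the spine, and no new dimension count is needed beyond what has already been established in the previous section.
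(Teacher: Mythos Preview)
Your proposal is correct and matches the paper's approach: the corollary is stated immediately after the main theorem with no separate proof, so it is understood as the direct substitution $m=0$ into the formula $\mathrm{vcd}(\mathrm{Out}(F_n;A_1,\ldots,A_k)) = 2n - 2\sum_i s(i) + 2k - 2 - m$, exactly as you describe.
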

Our computation of the virtual cohomological dimension of the
relative outer space agrees with the computation in \cite{BCV} when
$m=k$. For $k=n$ and $s(1)= \cdots =s(k)=1$, $\mathrm{Out}(F_n;A_1, \ldots, A_k)$ is
called the \emph{pure symmetric automorphism group}. In \cite{C} Collins
showed that the virtual cohomological
dimension of the pure symmetric automorphism group is $n-2$.

\begin{remark}
Suppose that the wedge cycles lie in a maximally blown up graph in
$c$ connected components in the relative spine $S_n(\mathcal{A})$
and that the graph has $V$ vertices and $E$ edges. Because two
wedge cycles must meet in a valence $4$ vertex and the dual graph of
the wedge cycles is a forest, there are $k-c$ vertices of valence
$4$ in the graph. The vertices corresponding to the basepoints of
the wedge cycles have valence $2s(1), \ldots, 2s(k-m)$. The
remaining vertices have valence $3$.
\newline Therefore,
$$
E  =  \frac{3(V-2k+c+m)}{2} + s(1) +  \cdots + s(k-m) + 2(k-c) =
$$
$$
= \frac{3V-2k-c+ 3m+ 2 s(1) +  \cdots + 2 s(k-m)}{2}.
$$
Because $V-E=1-n$, we get
$$
V = 2n + 2k + c -2s(1)- \cdots -2 s(k-m) - 3m -2.
$$
We can collapse $V$ vertices to $1$ vertex by first collapsing all except one edge
in each cycle $\phi(C_i^j)$ and then collapsing some remaining tree.
That gives a simplex of dimension $2n + 2k + c -2 s(1) - \cdots -2
s(k) -3-m$ $(c \leq k)$.
\newline Collapse all the separating edges and note that the maximum
$c$ is $k$ if $n - \sum_{i=1}^{k} s(i) \geq 1$ and $1$ if $n=
\sum_{i=1}^{k} s(i)$.
\newline If $n- \sum_{i=1}^{k} s(i) \geq 1$, then the maximum $c = k$ and the
dimension of a maximal simplex is $2n + 3k -2 s(1) - \cdots -2 s(k)
-3-m$. Notice that if $k=0$, then $m=0$ and
we get the classical result $\mathrm{vcd}(\mathrm{Out}(F_n)) \leq 2n -3$.
\newline If $n= \sum_{i=1}^{k} s(i)$, then the maximum $c
= 1$ and the dimension of a maximal simplex is $2n + 2k -2 s(1) -
\cdots -2 s(k) -2-m$.
\newline Hence,
$$
\mathrm{dim}(S_n(\mathcal{A}))= \left\{
                               \begin{array}{ll}
                                 2n + 3k -2 \sum_{i=1}^{k} s(i)
-3-m, & \mbox{ if } n- \sum_{i=1}^{k} s(i) \geq 1 \\
                                 2n + 2k -2 \sum_{i=1}^{k} s(i)
-2-m, & \mbox{ if } n= \sum_{i=1}^{k} s(i). \\
                               \end{array}
                             \right.
$$
Notice that if $n= \sum_{i=1}^{k} s(i)$, then $\mathrm{dim}(S_n(\mathcal{A}))
 = \mathrm{dim}(D_n(\mathcal{A}))$ (see Example~\ref{ex2}).
\end{remark}
Because a maximal graph has $3n+3k- 2 \sum_{i=1}^{k} s(i)-3-m$ edges
if $n- \sum_{i=1}^{k} s(i) \geq 1$ and $3n+2k -2 \sum_{i=1}^{k} s(i)
-2-m$ edges if $n= \sum_{i=1}^{k} s(i)$ and we impose the conditions
that the relative volume is $1$ (if $n- \sum_{i=1}^{k} s(i) \geq 1$)
and that the sum of the length of the edges in each cycle is $1$, we
have the following result.
\begin{corollary}
$$
\mathrm{dim}(\mathrm{CV}_n(\mathcal{A}))=
\left\{
 \begin{array}{ll}
 3n+3k- 3 \sum_{i=1}^{k} s(i)-4-m, & \mbox{ if } n- \sum_{i=1}^{k} s(i) \geq 1 \\
 3n+2k- 3 \sum_{i=1}^{k} s(i)-2-m, & \mbox{ if } n= \sum_{i=1}^{k} s(i). \\
 \end{array}
\right.
$$
\end{corollary}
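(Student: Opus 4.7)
The plan is to extract $\mathrm{dim}(\mathrm{CV}_n(\mathcal{A}))$ by combining the edge count of a maximal $(\mathcal{A},n)$-graph, already established in the preceding remark, with the polysimplex description of points in $\mathrm{CV}_n(\mathcal{A})$. Recall that a point $(\Gamma,\phi)$ lies in the interior of a polysimplex $P\Delta = \Delta_1^1 \times \cdots \times \Delta_{s(k)}^{k} \times \sigma$, and its dimension equals the number of edges of $\Gamma$ minus the number of active affine normalization conditions.

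First I would record, from the remark, that a maximal graph has $E = 3n+3k-2\sum_i s(i)-3-m$ edges when $n-\sum_i s(i) \geq 1$ and $E = 3n+2k-2\sum_i s(i)-2-m$ edges when $n=\sum_i s(i)$. Next I would enumerate the normalization constraints imposed in Definition~\ref{outspace}: each of the $s(1)+\cdots+s(k) = \sum_i s(i)$ wedge cycles $\phi(C_i^j)$ contributes a single linear relation $\sum_{e \subset \phi(C_i^j)} l(e) = 1$, and whenever the complement $\Gamma \setminus \bigcup_{i,j}\phi(C_i^j)$ is nonempty, the relative volume condition $\sum_m t_m = 1$ adds one further relation.

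In the first regime, $n-\sum_i s(i) \geq 1$ forces the complement to contain the free edges descended from $e_1,\ldots,e_{n-\sum_i s(i)}$, so the relative volume constraint is active and
\[
\mathrm{dim}(P\Delta) = E - \textstyle\sum_i s(i) - 1 = 3n+3k-3\textstyle\sum_i s(i)-4-m.
\]
In the second regime, a maximal graph (for which the remark gives $c=1$) has all its edges lying inside wedge cycles — the complement is empty, the relative volume condition is vacuous, and
\[
\mathrm{dim}(P\Delta) = E - \textstyle\sum_i s(i) = 3n+2k-3\textstyle\sum_i s(i)-2-m.
\]
Since $\mathrm{CV}_n(\mathcal{A})$ is a union of open polysimplices and the above counts are realized by maximally blown-up graphs, taking the supremum yields the two stated formulas.

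The main obstacle is bookkeeping rather than substance: I would need to verify carefully that when $n = \sum_i s(i)$ a maximal $(\mathcal{A},n)$-graph can genuinely be built with no edges outside the wedge cycles (so no relative volume factor appears in $P\Delta$), and conversely that when $n - \sum_i s(i) \geq 1$ the free-edge part is always present so that the $\sigma$-factor is nondegenerate. Both facts are already encoded in the valence and forest-of-dual-graph computations of the remark; once these are in hand, the corollary is obtained by the subtractions above.
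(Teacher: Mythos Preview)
Your proposal is correct and follows essentially the same approach as the paper: take the maximal edge counts from the preceding remark and subtract the $\sum_i s(i)$ cycle-length constraints together with the relative-volume constraint when it is present (i.e., when $n-\sum_i s(i)\geq 1$). The paper's justification is the single sentence immediately preceding the corollary, and your argument expands exactly that sentence; the only slip is terminological (the $\phi(C_i^j)$ are the individual circles, not the wedge cycles $\mathbb{B}_j$), but your count of $\sum_i s(i)$ constraints is correct.
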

Note that if $k=0$ and $n > 1$, then $m=0$ and we have $\mathrm{dim}(\mathrm{CV}_n)=
3n-4$.

\newpage


\begin{thebibliography}{40}

\bibitem{AK} Y. Algom-Kfir, \emph{Strongly contracting geodesics in Outer
Space}, arXiv:0812.1555 math.GR.

\bibitem{BH} M. Bestvina, M. Handel, \emph{Train tracks and automorphisms of free groups},
Annals of Math. (2), \textbf{135} (1992), no. 1, 1--51.

\bibitem{BCV} K. Bux, R. Charney, K. Vogtmann \emph{Automorphisms of two-dimensional RAAGS and partially symmetric automorphisms of free groups},
Groups, Geometry, and Dynamics \textbf{3} (2009) 525--539.

\bibitem{C} D. J. Collins, \emph{Cohomological dimension and symmetric automorphisms of a free group},
Comment. Math. Helv. \textbf{64} (1989), no. 1, 44--61.

\bibitem{CV} M. Culler, K. Vogtmann, \emph{Moduli of graphs and automorphisms of free
groups}, Invent. Math. \textbf{84} (1986), no. 1, 91--119.

\bibitem{FM} S. Francaviglia, A. Martino, \emph{Metric properties of Outer Space},
arXiv:0803.0640v2 math.GR.

\bibitem{HV} A. Hatcher, K. Vogtmann, \emph{Cerf theory for graphs},
J. London Math. Soc. (2) \textbf{58} (1998), no. 3, 633--655.

\bibitem{J} C. A. Jensen, \emph{Contractibility of fixed point sets of outer space},
Topology Appl. \textbf{119} (2002) 287--304.

\bibitem{JW} C. A. Jensen, N. Wahl, \emph{Automorphisms of free groups with
boundaries}, Algebr. Geom. Topol. \textbf{4} (2004) 543--569.


\bibitem{KV} S. Krsti\'{c}, K. Vogtmann, \emph{Equivariant outer space and automorphisms of
free-by-finite groups}, Comment. Math. Helv. \textbf{68} (1993), no.
2, 216--262.

\bibitem{Q} D. Quillen, \emph{Homotopy properties of the poset of $p$-subgroups of a finite
group}, Advances in Math. \textbf{28} (1978) 101--128.

\bibitem{S} J. Stallings, \emph{Topology of finite graphs}, Invent. Math. \textbf{71} (1983)
551--565.

\bibitem{V} K. Vogtmann, \emph{Automorphisms of free groups and Outer Space},
Geometriae Dedicata \textbf{94} (2002) 1--31.



\end{thebibliography}
\end{document}